\documentclass[a4paper,10pt]{amsart}


\usepackage[utf8]{inputenc} 

\usepackage{amsmath}
\usepackage{amstext}
\usepackage{amsfonts}
\usepackage{amsthm}
\usepackage{amssymb}
\usepackage{enumitem}

\usepackage{graphicx}
\usepackage{dsfont}
\usepackage{nicefrac}


\usepackage{hyperref}
\hypersetup{colorlinks}


\numberwithin{equation}{section}


\newcommand{\dualV}[2]{\langle #1, #2 \rangle_{V^*\times V}}
\newcommand{\dualVell}[2]{\langle #1, #2 \rangle_{V_{\ell}^*\times V_{\ell}}}

\newcommand{\dualX}[3][]{\langle #2,#3 \rangle_{#1^{*}\times #1}}

\newcommand{\ska}[3][]{( #2 , #3 )_{#1}}
\newcommand{\skab}[3][]{\big(#2,#3\big)_{#1}}
\newcommand{\skaB}[3][]{\Big(#2,#3\Big)_{#1}}
\newcommand{\W}{\mathcal{W}^p(0,T)}
\newcommand{\diff}[1]{\,\mathrm{d}#1}
\newcommand{\R}{\mathbb{R}}
\newcommand{\N}{\mathbb{N}}
\newcommand{\incl}{\hookrightarrow}
\newcommand{\weak}{\rightharpoonup}
\newcommand{\weaks}{\stackrel{\ast}{\rightharpoonup}}
\newcommand{\A}{\mathbf{A}}
\newcommand{\F}{\mathbf{f}}
\newcommand{\U}{\mathbf{U}}

\DeclareMathOperator*{\wlim}{w-lim}

\theoremstyle{plain}

\newtheorem{definition}{Definition}[section]
\newtheorem{theorem}[definition]{Theorem}
\newtheorem{lemma}[definition]{Lemma}

\newtheorem{assumption}{Assumption}

\theoremstyle{definition}
\newtheorem{remark}[definition]{Remark}

\newtheorem{example}[definition]{Example}

\begin{document}

\title[A variational approach to the sum splitting scheme]{A variational approach 
to the sum splitting scheme}

\author[M.~Eisenmann]{Monika Eisenmann}
\address{Monika Eisenmann\\
  Centre for Mathematical Sciences\\
  Lund University\\
  P.O.\ Box 118\\
  221 00 Lund, Sweden}
\email{monika.eisenmann@math.lth.se}

\author[E.~Hansen]{Eskil Hansen}
\address{Eskil Hansen\\
  Centre for Mathematical Sciences\\
  Lund University\\
  P.O.\ Box 118\\
  221 00 Lund, Sweden}
\email{eskil.hansen@math.lth.se}

\begin{abstract}
	Nonlinear parabolic equations are frequently encountered in applications and 
	efficient approximating techniques for their solution are of great importance. In 
	order to provide an effective scheme for the temporal approximation of such 
	equations, we present a sum splitting scheme that comes with a straight forward 
	parallelization strategy. 
  The convergence analysis is carried out in a variational framework that allows for 
  a general setting and, in particular, nontrivial temporal coefficients. 
  The aim of this work is to  illustrate the significant advantages of a variational 
  framework for operator splittings and use this to extend semigroup based theory 
  for this type of  schemes. 
\end{abstract}
\keywords{Nonlinear evolution problem, monotone operator, operator splitting, 
  convergence.}

\maketitle

\section{Introduction}\label{sec:intro}

Nonlinear parabolic equations, which we state as abstract evolution equation of 
the form 
\begin{equation}\label{1eq:PDE}
  u'(t)+A(t)u(t)=f(t),\quad t\in(0,T)\quad\text{and}\quad u(0)=u_{0},
\end{equation}
are frequently encountered in applications appearing in physics, chemistry and 
biology; see \cite{Aronsson.1996} and \cite[Section~1.3]{Vazquez.2007}. A few 
standard examples of the diffusion operator $A(t)v$ are 
\begin{equation}\label{1eq:op}
  -\nabla\cdot\bigl(\alpha(t) |\nabla v|^{p-2} \nabla v\bigr), 
  \quad
  -\Delta(\alpha(t) |v|^{p-2}v\bigr)
  \quad\text{and}\quad
  -\sum_{i=1}^{d} D_{i}\bigl(\alpha(t) |D_{i} v|^{p-2} D_{i} v\bigr).
\end{equation}
Here, the first and second operator is referred to as the $p$-Laplacian and the 
porous medium operator, respectively. 

Due to the problems' significance, effective techniques for their approximations 
become crucial. As we consider parabolic equations, for stability reasons the 
temporal approximation schemes need to be implicit. 
For equations which in addition are given in several 
spatial dimensions the resulting spatial and temporal approximation schemes 
require large scale computations. This typically demands implementations in 
parallel on a distributed hardware. One possibility to design temporal 
approximation schemes that can directly be implemented in a parallel fashion is 
to utilize operator splitting; see, e.g., \cite{HundsdorferVerwer.2003} for an 
introduction. 
Note that the solutions of nonlinear parabolic problems typically lack high-order 
spatial and temporal regularity. Thus, there is little use to consider high-order 
time integrators. 

In order to illustrate the splitting concept, consider the simplest implicit scheme, 
namely, the backward Euler method. For $N$ temporal steps, a step size
$k=T/N$ and the starting value $\U^{0}=u^{0}$ the backward Euler 
approximation $\U^{n}$ of $u(nk)$ is given by the recursion
\begin{equation*}
  \frac{1}{k}(\U^{n}-\U^{n-1})+k\A^{n}\U^{n}= 
  \F^{n},\quad n\in\{1,\ldots, N\},
\end{equation*}
where $(\A^{n})_{n}$ and $(\F^{n})_{n}$ are suitable 
approximations for $A$ and  $f$, respectively. Assuming that the nonlinear 
resolvent of $\A^n$ exists, we find the reformulation 
\begin{equation*}
  \U^{n}=(I+k\A^{n})^{-1}\bigl(\U^{n-1}+k\F^{n}),\quad
   n\in\{1,\ldots, N\}.
\end{equation*}
To implement one step of the backward Euler scheme in parallel, we split the Euler 
step into $s$ independently solvable problems. To this end, we decompose 
$\A^{n}$ and $\F^{n}$ as 
\begin{equation}\label{1eq:decomp}
  \A^{n}=\sum_{\ell=1}^{s} \A^{n}_{\ell}
  \quad\text{and}\quad
  \F^{n}=\sum_{\ell=1}^{s} \F^{n}_{\ell},
  \quad n\in\{1,\ldots, N\}.
\end{equation}
With this abstract operator splitting, one can design various temporal 
approximation schemes. Two possibilities to split one single Euler step are given 
by formally multiplying or adding the operators $(I+k\A^{n}_{\ell})^{-1}$, $\ell \in 
\{1,\dots,s\}$. 
A composition of such fractional step operators yields the Lie splitting scheme
\begin{equation*}
  (I+k\A^{n})^{-1}\approx\prod_{\ell=1}^{s}(I+k\A^{n}_{\ell})^{-1}.
\end{equation*}
Thus, we obtain $s$ possibly easier subproblems that are solved after each other. 
For a straightforward parallelization it is more convenient to choose a splitting, 
where the single steps can be computed at the same time. The sum splitting
\begin{equation*}
  (I+k\A^{n})^{-1}\approx \frac{1}{s} \sum_{\ell=1}^{s}(I+k s 
  \A^{n}_{\ell})^{-1}
\end{equation*}
offers this crucial advantage. The $s$ fractional steps are solved at the same time 
and their average is used as an approximation. 

The decomposition~\eqref{1eq:decomp} can be utilized in many different ways. A 
first possible application is a source term splitting, where the high-order terms 
are split from the low-order terms. For example a source term splitting of the 
reaction-diffusion equation governed by $A(t)v= 
-\nabla\cdot\bigl(\alpha(t)\nabla v\bigr) +p(t,v)$ would have the form 
\begin{equation*}
  \A_{1}^{n}v= -\nabla\cdot\bigl(\alpha(nk)\nabla v\bigr) 
  \quad\text{and}\quad 
  \A_{2}^{n}v=p(nk,v).
\end{equation*}
Here, the actions of $(I+k\A^{n}_{1})^{-1}$ can be evaluated by a standard 
fast linear elliptic solver and the actions of the nonlinear resolvent 
$(I+k\A^{n}_{2})^{-1}$ can often be expressed in a closed form. Examples 
of studies dealing with various source term splittings can be found in 
\cite{ArrarasEtAl.2017,HansenStillfjord.2013, 
  KochEtAl.2013, Eisenmann.2019}. 

Another possibility is a dimension splitting, where each spatial derivative is 
considered as a separate differential operator. For example, the dimension 
splitting of the nonlinear porous medium operator and the third operator in 
\eqref{1eq:op} are given by
\begin{equation*}
  \A_{\ell}^{n}v=-D_{\ell\ell}\bigl(\alpha(nk) |v|^{p-2}v\bigr)
  \quad\text{and}\quad
  \A_{\ell}^{n}v=-D_{\ell}\bigl(\alpha(nk) |D_{\ell} v|^{p-2} D_{\ell} v\bigr),
\end{equation*}
respectively. This splitting yields that the action of each nonlinear resolvent 
$(I+k\A^{n}_{\ell})^{-1}$
can be separated into lower-dimensional subproblems that can be solved on their 
own. Note that the $p$-Laplacian lacks a natural dimension splitting. Examples of 
convergence results for the dimension splitting of the third equation in 
\eqref{1eq:op} can be found in \cite{Temam.1968}, where the Lie 
scheme is used, and in \cite{HansenOstermann.2008}, where the sum, Lie and 
Peaceman--Rachford schemes are considered for the autonomous case.

A limitation of the dimension splitting approach is the rather large need of 
communication between the subproblems, which can impede an effective 
distributed implementation. Dimension splitting is also quite restrictive in terms 
of the spatial domains that can be considered.
A modern alternative to dimension splitting, which is applicable to a very general 
class of spatial domains, is the domain decomposition based splitting. 
Here, the subproblems are given on $s$ spatial subdomains that share a small 
overlap. As an example consider the three nonlinear diffusion 
operators~\eqref{1eq:op} and introduce 
a partition of unity $(\chi_{\ell})_{\ell=1}^{s}$, where each weight function 
$\chi_{\ell}$ vanishes outside its corresponding spatial subdomain. The domain 
decompositions $\A_{\ell}^{n}v$ are then 
\begin{align*}
  -\nabla\cdot\bigl(\chi_{\ell}\alpha(nk) |\nabla v|^{p-2} \nabla v\bigr),
  \quad
  -\Delta\bigl(\chi_{\ell}\alpha(nk) |v|^{p-2} v\bigr)\\
  \quad \text{and} \quad
  -\sum_{i=1}^{d}D_{i}\bigl(\chi_{\ell}\alpha(nk) |D_{i} v|^{p-2} D_{i} v\bigr),
\end{align*}
respectively. This approach is well suited for a parallel computation, as the 
actions of $(I+k\A^{n}_{\ell})^{-1}$ can be solved independently of each 
other and the communication required is small, due to the small overlaps between 
the subdomains. Studies regarding domain decomposition based splittings 
applied to linear and autonomous parabolic equations include 
\cite{ArrarasEtAl.2017, HansenHenningsson.2016, Mathew.1998, 
  Vabishchevich.2008}. Convergence for the Lie and sum splittings are given in 
\cite{EisenmannHansen.2018} for the autonomous $p$-Laplace and porous 
medium equations. 

Operator splitting schemes are typically analyzed in a semigroup framework, 
which yields convergence for a wide range of temporal 
approximation schemes, including the Lie and sum schemes; see 
\cite{Barbu.1976} for more details on the solution concept. However, there does 
not seem to be a straightforward way to extend the semigroup based convergence 
analysis to nonautonomous evolution equations. 
Furthermore, the semigroup framework requires some additional regularity 
conditions to relate the intersection of the domains $D(A_{\ell})$, $\ell \in 
\{1,\dots,s\}$, with the domain $D(A)$ of the full operator. The latter, e.g., implies 
restrictions on the domain decomposition of the $p$-Laplace equation 
\cite[Section~6]{EisenmannHansen.2018}.

In a variational setting this problem is avoided in a natural way while at 
the same time the analysis of nonautonomous problems is accessible. Also the 
structure of this approach is well suited to include a Galerkin scheme and 
therefore, in particular, the finite element method. 
However, the analysis typically needs to be tailored for each method.
The variational setting is a standard tool for existence theories 
\cite{Emmrich.2004, Roubicek.2013, Zeidler.1990B} and has been used 
in several works in the context of ``unsplit'' time integrators 
\cite{Emmrich.2009b, Emmrich.2009, Emmrich.2009c,    
EmmrichThalhammer.2010}. However, in the context of temporal splitting 
schemes for nonlinear parabolic equations the only variational studies that we are 
aware of is \cite{Temam.1968}. 
Here, a variational analysis is employed when proving the convergence of the Lie 
scheme applied to nonautonomous evolution equations and, as already stated, is 
applied to the dimension splitting of the third equation in \eqref{1eq:op}. 

Hence, the aim of this paper is threefold. 
Firstly, we aim to generalize the previous semigroup based analysis for the sum 
scheme to nonautonomous evolution equations without any implicit regularity 
assumptions. The latter generalization will be applicable to splittings of 
reaction-diffusion, dimension and domain decomposition type. 
Secondly, we intend to extend the abstract variational convergence results for the 
Lie scheme to the sum splitting scheme. As this requires a tailored convergence 
proof, it is not a trivial implication.
Thirdly, we also strive to illustrate the advantages of a variational approach in the 
context of splitting analyses. 

This paper is organized as follows: In Section~\ref{sec:Setting}, we state the exact 
assumptions that are needed on the abstract variational framework considered in 
the paper. This section also contains an example that shows that the relevant 
application of domain decomposition integrators for the $p$-Laplacian operator 
fits into our abstract framework. This in mind, we prove the well-posedness of the 
sum scheme, as well as suitable a priori bounds in Section~\ref{sec:solvability}. 
The main convergence results are proven in Section~\ref{sec:convergence}; see 
Theorem~\ref{thm:limitWeak} and Theorem~\ref{thm:limitStrong}.

\section{Abstract setting} \label{sec:Setting}

In this section, we introduce an abstract setting for the convergence 
analysis of the sum splitting scheme. We begin by presenting the exact 
assumptions made on the data and present the temporal discretization of 
the problem. This at hand, we can state the scheme that we will work with in this 
paper. The section ends with a more concrete setting that exemplifies the abstract 
framework.

\begin{assumption}\label{ass:spaces}
	Let $(H,\ska[H]{\cdot}{\cdot},\|\cdot\|_H)$ be a real, separable Hilbert space 
	and let $(V, \|\cdot\|_V)$ be a real, separable, reflexive Banach space such 
	that $V$ is continuously and densely embedded into $H$.  
	Further, there exist a seminorm $|\cdot|_V$ on $V$ and $c_V \in 
	(0,\infty)$ such that $\|\cdot\|_V \leq c_V \big( \|\cdot \|_H + 
	|\cdot|_V\big)$ is fulfilled.
	
	Furthermore, for $s \in \N$ let $(V_{\ell}, \|\cdot\|_{V_{\ell}})$, $\ell \in \{1,
	\dots, s\}$, be real reflexive Banach spaces that are continuously and
	densely embedded into $H$, fulfill $\bigcap_{\ell = 1}^s V_{\ell} = V$ 
	and $\sum_{\ell = 1}^{s} \|\cdot\|_{V_{\ell}}$ is equivalent to $\|\cdot\|_V$.
	For every $\ell \in \{1,\dots,s\}$, there exists a seminorm 
	$|\cdot|_{V_{\ell}}$ and $c_{V_{\ell}} \in  (0,\infty)$ such that 
	$\|\cdot\|_{V_{\ell}} \leq c_{V_{\ell}} \big( \|\cdot \|_H + 
	|\cdot|_{V_{\ell}}\big)$ and $\sum_{\ell  = 1}^{s}|\cdot|_{V_{\ell}}$ is 
	equivalent to $|\cdot|_{V}$.
\end{assumption}

Identifying $H$ with its dual space $H^*$, we obtain the Gelfand triples
\begin{align*}
	V \overset{d}{\incl} H \cong H^* \overset{d}{\incl}  V^*
	\quad \text{and}\quad
	V_{\ell} \overset{d}{\incl} H \cong H^* \overset{d}{\incl} V^*_{\ell},
	\quad \ell \in \{1,\dots,s\}.
\end{align*}
The next assumption states the properties of the differential operator that 
are of importance.

\begin{assumption}\label{ass:A}
	Let $H$ and $V$ be given as stated in Assumption~\ref{ass:spaces}.
	Furthermore, for $T>0$ and $p > 1$, let $\{A(t)\}_{t \in [0,T]}$ be a 
	family of operators such that $A(t)  \colon V \to V^*$ satisfy the 
	following conditions:
	\begin{itemize}
		\item[(1)] The mapping $Av \colon [0,T] \to V^*$, $v \in V$, given by $t 
		\mapsto A(t)v$ is continuous.
		\item[(2)] The operator $A(t) \colon V \to V^*$, $t \in [0,T]$, is
		radially continuous, i.e., the mapping $\tau \mapsto
		\dualX[V]{A(t)(u+\tau v)}{w}$ is continuous on $[0,1]$ for $u,v,w\in V$.
		\item[(3)] The operator $A(t) \colon V \to V^*$, $t \in [0,T]$, fulfills a 
		monotonicity condition such that  there exists $\eta \geq 0$ with
		\begin{align*}
			\dualX[V]{A(t)v - A(t)w}{v - w} \geq \eta |v-w|_V^p, \quad v,w \in V.
		\end{align*}
		\item[(4)] The operator $A(t) \colon V \to V^*$, $t \in [0,T]$, is uniformly
		bounded such that there exists $\beta >0$ with
		\begin{align*}
			\|A(t) v \|_{V^*} \leq \beta \big(1 + \|v\|_V^{p-1}\big), \quad v \in V.
		\end{align*}
		\item[(5)] The operator $A(t) \colon V \to V^*$, $t \in [0,T]$, fulfills a 
		coercivity condition such that there exist $\mu >0$ and $\lambda \geq 0$ 
		with
		\begin{align*}
			\dualX[V]{A(t) v}{v} + \lambda \geq  \mu |v|_{V}^p, \quad v \in V.
		\end{align*}
	\end{itemize}
\end{assumption}

Now, we can combine Assumption~\ref{ass:spaces} and Assumption~\ref{ass:A} to state 
a decomposition of the operator family $\{A(t)\}_{t \in [0,T]}$ that we employ in 
the analysis of the sum splitting scheme.

\begin{assumption}\label{ass:Aell}
	For $s \in \N$ let $H$, $V$ and $V_{\ell}$, $\ell \in \{1,\dots,s\}$,
	fulfill Assumption~\ref{ass:spaces}. For $p > 1$ and $T >0$  let the 
	operator family $\{A(t)\}_{t\in [0,T]}$ be given such that it fulfills
	Assumption~\ref{ass:A}. 
	Further, let $\{A_{\ell}(t)\}_{t\in [0,T]}$, $\ell \in \{1,\dots,s\}$, be given 
	such that $A_{\ell}(t) \colon V_{\ell} \to  V_{\ell}^*$ fulfills  
	Assumption~\ref{ass:A}, with $V$ replaced by $V_{\ell}$ for every $\ell \in 
	\{1,\dots,s\}$. Moreover, let the sum property
	\begin{align*}
		\sum_{\ell = 1}^{s} A_{\ell}(t)v = A(t)v \quad \text{in } V^*, \quad t \in 
		[0,T], v \in V
	\end{align*}
	be fulfilled.
\end{assumption}

\begin{remark}\label{rem:coefA}
	Note that the optimal coefficients $\beta, \eta, \lambda, \mu$ for the operator 
	families $\{A(t)\}_{t \in [0,T]}$, $\{A_{\ell}(t)\}_{t\in [0,T]}$, $\ell \in \{1,\dots, 
	s\}$, of operators do not necessarily have to be the same. For the sake of 
	simplicity, we assume that these coefficients coincide.
\end{remark}

We also consider the differential operators of Assumption~\ref{ass:Aell} as 
Nemytskii operators acting on spaces of Bochner integrable functions. For an 
introduction to Bochner integrable functions we refer the reader to 
\cite[Chapter~II]{DiestelUhl.1977} or \cite[Section~4.2]{Winkert.2018}. Some 
properties of such Nemytskii operators are collected in the next lemma. The 
proofs can be found in \cite[Lemma~8.4.4]{Emmrich.2004}.

\begin{lemma} \label{lemma:ABochner}
	For $p > 1$, $q = \frac{p}{p-1}$ and $T>0$ let $\{A(t)\}_{t\in[0,T] }$ 
	fulfill Assumption~\ref{ass:A}.
	Then the operator $(A v)(t) = A(t)v(t)$ maps $L^p(0,T;V)$ into $L^q(0,T;V^*)$.
	The operator is radially continuous, i.e., the mapping $\tau \mapsto \langle 
	A(u+\tau v), w \rangle_{L^q(0,T;V^*) \times L^p(0,T;V)}$
	is continuous on $[0,1]$ for all $u,v,w \in L^p(0,T;V)$.  Furthermore, it 
	fulfills a monotonicity, a boundedness and a coercivity condition such that
	\begin{align*}
		&\langle A v - A w, v -w \rangle_{L^q(0,T;V^*) \times L^p(0,T;V)}
		\geq \eta \int_{0}^{T} |v(t) - w(t) |_{V}^p \diff{t},\\
		&\|A v \|_{L^q(0,T;V^*)}
		\leq \beta \big(T^{\frac{1}{q}} + \|v\|_{L^p(0,T;V)}^{p-1}\big),\\
		&\langle A v, v \rangle_{L^q(0,T;V^*) \times L^p(0,T;V)} + \lambda T
		\geq \mu \int_{0}^{T} |v(t)|_{V}^p \diff{t}
	\end{align*}
	for all $v,w \in L^p(0,T;V)$.
\end{lemma}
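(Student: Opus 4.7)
The plan is to derive each of the five assertions by transferring the pointwise-in-$t$ properties of $\{A(t)\}_{t\in[0,T]}$ stated in Assumption \ref{ass:A} to the Bochner setting, using the standard identification $\langle \varphi, v\rangle_{L^q(0,T;V^*) \times L^p(0,T;V)} = \int_0^T \langle \varphi(t), v(t)\rangle_{V^* \times V}\diff{t}$. First I would verify that $Av$ is strongly measurable and belongs to $L^q(0,T;V^*)$; once this well-definedness is in hand, the monotonicity, boundedness, coercivity and radial-continuity statements then follow by integration together with routine convergence arguments.

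For measurability, I would approximate an arbitrary $v \in L^p(0,T;V)$ by simple $V$-valued functions $v_n = \sum_i \chi_{E_{n,i}} \xi_{n,i}$ with $v_n(t) \to v(t)$ in $V$ a.e.\ and in $L^p(0,T;V)$. By Assumption \ref{ass:A}(1), each $t \mapsto A(t)\xi_{n,i}$ is continuous into $V^*$, so $t \mapsto A(t)v_n(t)$ is strongly measurable. The operator $A(t)$, being radially continuous, monotone and bounded, is demicontinuous on $V$, hence $A(t)v_n(t) \weak A(t)v(t)$ in $V^*$ a.e.; since $V$ is separable and reflexive, $V^*$ is separable, and the Pettis measurability theorem yields that $Av$ is strongly measurable. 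Membership in $L^q(0,T;V^*)$ together with the claimed bound then follows from Assumption \ref{ass:A}(4) via the pointwise estimate $\|A(t)v(t)\|_{V^*} \leq \beta(1+\|v(t)\|_V^{p-1})$ combined with Minkowski's inequality and the identity $(p-1)q = p$.

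The monotonicity and coercivity inequalities are now immediate from integrating the pointwise estimates of Assumption \ref{ass:A}(3) and (5) under the above duality identification. For radial continuity, I fix $u,v,w \in L^p(0,T;V)$ and apply dominated convergence to $\tau \mapsto \int_0^T \langle A(t)(u(t)+\tau v(t)), w(t)\rangle_{V^* \times V} \diff{t}$: Assumption \ref{ass:A}(2) delivers pointwise continuity in $\tau$, while the growth bound of Assumption \ref{ass:A}(4) together with $\|u(t)+\tau v(t)\|_V^{p-1} \leq C(\|u(t)\|_V^{p-1}+\|v(t)\|_V^{p-1})$ and Hölder's inequality yields an integrable, $\tau$-independent majorant. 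The main obstacle is the measurability step, since radial continuity alone does not guarantee that $t \mapsto A(t)v(t)$ is measurable for general $v \in L^p(0,T;V)$; this is precisely what the demicontinuity upgrade combined with the Pettis theorem resolves, after which the remaining estimates are direct integrations.
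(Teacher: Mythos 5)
Your proof is correct and is essentially the standard argument: the paper itself does not prove Lemma~\ref{lemma:ABochner} but defers to \cite[Lemma~8.4.4]{Emmrich.2004}, whose proof proceeds exactly as yours does --- strong measurability of $t\mapsto A(t)v(t)$ via approximation by simple functions, demicontinuity of the monotone, radially continuous, bounded operators $A(t)$, and the Pettis theorem (using that $V^*$ is separable because $V$ is separable and reflexive), followed by integration of the pointwise estimates of Assumption~\ref{ass:A} with the exponent identity $(p-1)q=p$ and a dominated-convergence argument for the radial continuity. I see no gaps.
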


The Nemytskii operator of $\{A_{\ell}(t)\}_{t\in[0,T]}$, $\ell \in 
\{1,\dots,s\}$, as introduced in Assumption~\ref{ass:Aell} also fulfills the same 
bounds with $V$ replaced by $V_{\ell}$. To make our setting complete, it remains to 
state the assumptions on $f$.

\begin{assumption}\label{ass:fell}
	Let $V$ and $V_{\ell}$, $\ell \in \{1,\dots,s\}$, fulfill
	Assumption~\ref{ass:spaces}. Let $p$ be the same value as in 
	Assumption~{\ref{ass:A}}  and $q = \frac{p}{p-1}$. Further, let $f$ be in 
	$L^{q}(0,T;V^*)$. Assume that there exist functions $f_{\ell} \in 
	L^{q}(0,T;V^*_{\ell})$, $\ell \in \{1,\dots,s\}$, such that
	\begin{align*}
		\sum_{\ell = 1}^{s}f_{\ell}(t)= f(t) \text{ in } V^*
		\quad \text{and} \quad
		\|f_{\ell} (t) \|_{V_{\ell}^*} \leq \| f(t) \|_{V^*}, \quad \text{ a.e. } t \in 
		(0,T).
	\end{align*}
\end{assumption}
Note that this assumption can be generalized to functions $f \in L^{q}(0,T;V^*) + 
L^2(0,T; H)$, compare, for example, \cite{Emmrich.2009, 
EmmrichThalhammer.2010}. In 
order to keep the presentation more simple, we only consider the smaller space 
$L^{q}(0,T;V^*)$.

We can now state the abstract evolution equation that we want to consider. 
In the following, let $\{A(t)\}_{t \in [0,T]}$ be as stated in 
Assumption~\ref{ass:Aell}, let $f$ fulfill Assumption~\ref{ass:fell} and let $u_0 
\in H$ be given. It is our overall goal to find an approximation to the solution 
$u$ of
\begin{align}
	\begin{split} \label{2eq:pde}
		\begin{cases}
			u' + A u = f \quad &\text{in } L^q(0,T;V^*),\\
			u(0) = u_0 &\text{in }H.
		\end{cases}
	\end{split}
\end{align}
This evolution equation is uniquely solvable in a variational sense with a
solution $u$ in $\W \incl C([0,T];H)$, where
\begin{align*}
	\W = \{ v \in L^p(0,T;V) : v' \in L^{q}(0,T;V^*) \};
\end{align*}
see \cite[Section~2.7]{LionsStrauss.1965} and 
\cite[Chapter~7--8]{Roubicek.2013} for further details. In the following 
analysis, we employ the sum splitting in 
order to obtain a temporal discretization of \eqref{2eq:pde}. To this end, we
consider an equidistant grid on $[0,T]$, where $N \in \N$, $k = \frac{T}{N}$ 
and $t_n = n k$ for $n \in \{ 0, \dots, N\}$. For $\ell \in \{1, \dots,s\}$ and $n 
\in \{1,\dots,N\}$ we introduce
\begin{align}\label{2eq:ApproxAF}
	\A_{\ell}^n = A_{\ell}(t_n)
	\quad \text{and} \quad
	\F_{\ell}^n = \frac{1}{k} \int_{t_{n-1}}^{t_n} f_{\ell}(t) \diff{t}.
\end{align}
We use this to construct an approximation $\U^n \approx u(t_n)$ of the
solution $u$ of \eqref{2eq:pde} for $n\in \{0,\dots,N \}$.
This approximation  is given through a recursion
\begin{align}\label{2eq:semiDiscret}
	\begin{cases}
		\frac{\U_{\ell}^n - \U^{n-1}}{k} + s \A_{\ell}^n \U_{\ell}^n = s 
		\F_{\ell}^n \quad
		&\text{in } V_{\ell}^*,
		\quad \ell \in \{ 1,\dots,s\},\\
		\U^n= \frac{1}{s} \sum_{\ell = 1}^{s} \U_{\ell}^{n} \quad &\text{in } H
	\end{cases}
\end{align}
for $n \in \{1,\dots,N\}$ with $\U^0 = u_0$.

\begin{example} \label{ex:pLap}
	 \begin{figure}
		\centering 
		\includegraphics[trim=4cm 17.5cm 7.5cm 5cm,clip, scale=0.6]{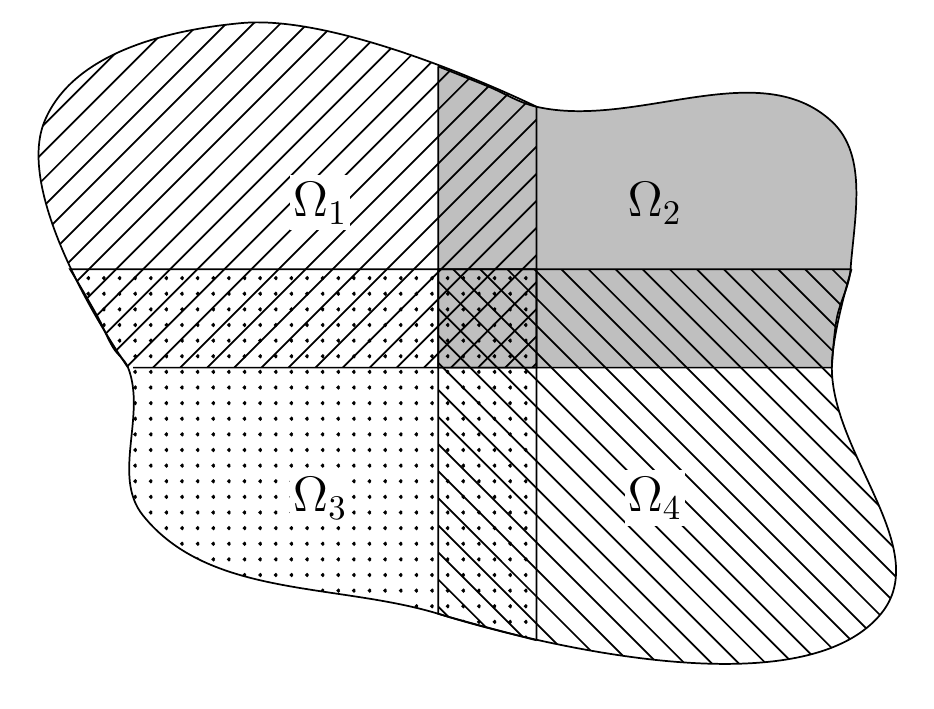}
		\includegraphics[trim=4cm 17.5cm 7.5cm 5cm,clip, scale=0.6]{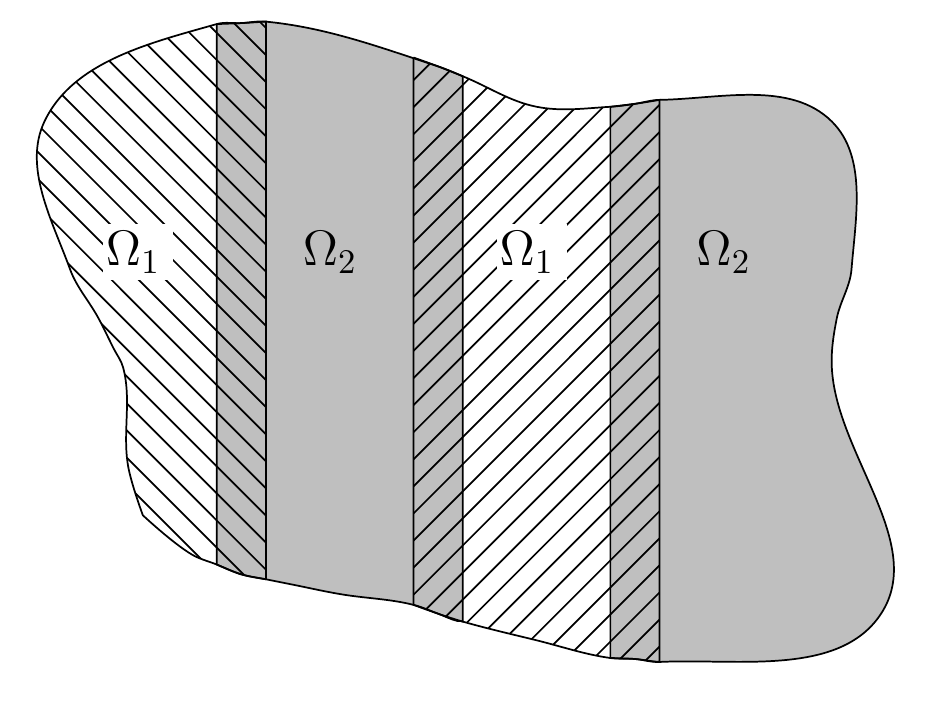}
		\caption{Examples of overlapping domains $\{ \Omega_{\ell} \}_{\ell =1}^{s}$ 
			of $\Omega\subset \R^{2}$, with $s=4$ subdomains (left) and $s=2$ 
			subdomains that are further decomposed into disjoint sets (right).}
		\label{fig:dom}
	\end{figure}
  A useful example that fits into our abstract setting is to approximate the 
  solution of the parabolic $p$-Laplace equation.
  Let $\Omega \subset \R^d$, $d\geq 1$, be given, where $\Omega$ is a bounded
  domain and the boundary $\partial \Omega$ is Lipschitz. For $p \geq 2$ we 
  consider the problem
  \begin{align}\label{2eq:parabolicPDE}
    \begin{cases}
      u_t(t,x) - \nabla \cdot (\alpha(t) |\nabla u(t,x)|^{p-2}\nabla u(t,x))  = 
      g(t,x),
      \quad &(t,x) \in (0,T) \times \Omega,\\
      \alpha(t) |\nabla u(t,x)|^{p-2} \nabla u(t,x) \cdot n = 0,
      &(t,x) \in (0,T) \times \partial \Omega,\\
      u(0,x) = u_0(x), &x \in \Omega,
    \end{cases}
  \end{align}
  where $n$ denotes outer pointing normal vector. The function $\alpha\colon 
  [0,T] \to \R$ is an element of $C([0,T])$, $u_0 \in L^2(\Omega)$ and $g \colon 
  (0,T) \times  \Omega \to \R$ is a suitably chosen integrable function that we 
  explain in more detail at a later point. 
  Applications for this type of equation can be found in \cite{Aronsson.1996}.
  Our theory allows to solve \eqref{2eq:parabolicPDE} with the help of a 
	domain decomposition scheme. A similar setting can be found in 
	\cite{HansenHenningsson.2016} for $p = 2$. The case $p \geq 2$ for an 
	autonomous problem with a more restrictive domain decomposition around the 
	boundary can be found in \cite[Section~6]{EisenmannHansen.2018}.
	For $s\in \N$ let $\{ \Omega_{\ell} \}_{\ell =1}^{s}$ be a family of overlapping 
	subsets of $\Omega $ such that $\bigcup_{\ell =1}^s \Omega_{\ell} = \Omega$ 
	is fulfilled. Furthermore, let each $\Omega_{\ell}$, $\ell \in \{1,\dots,s\}$, be 
	either an open connected set with a Lipschitz boundary or a union of pairwise 
	disjoint open, connected sets $\Omega_{\ell,i}$ such that $\bigcup_{i=1}^{r} 
	\Omega_{\ell,i} = \Omega_{\ell}$ and $\Omega_{\ell,i}$ has a Lipschitz 
	boundary for every $\ell \in \{1,\dots,s\}$ and $i \in \{1,\dots,r\}$; see 
	Fig.~\ref{fig:dom}. 
 
  On these subdomains let the partition of unity $\{\chi_{\ell} \}_{\ell 
  =1}^{s}\subset W^{1,\infty}(\Omega)$ be given such that
	\begin{align*}
		\chi_{\ell} (x)>0\text{ for all }x\in\Omega_{\ell},
		\quad
		\chi_{\ell} (x) = 0\text{ for all }x\in\Omega\setminus\Omega_{\ell},
		\quad
		\sum_{\ell =1}^{s} \chi_{\ell}= 1
	\end{align*}
	for $\ell \in \{1,\dots,s\}$. For such a function $\chi_{\ell}$, $\ell \in 
	\{1,\dots,s\}$, the weighted Lebesgue space $L^p(\Omega_{\ell},\chi_{\ell})^d$ 
	consists of all measurable functions $v = (v_1,\dots,v_d) \colon \Omega_{\ell} 
	\to \R^d$ such that
	\begin{align*}
		\|(v_1,\ldots,v_{d})\|_{L^p(\Omega_{\ell},\chi_{\ell})^d}
		= \Big(\int_{\Omega_{\ell}}\chi_{\ell} |(v_1,\ldots,v_{d})|^p 
		\diff{x}\Big)^{\frac{1}{p}}
	\end{align*}
	is finite. The space $L^p(\Omega_{\ell},\chi_{\ell})^d$ is a reflexive Banach
	space; see \cite[Chapter~1]{DrabekEtAl.1997} and 
	\cite[Theorem~1.23]{AdamsFournier.2003}. Note that $L^p(\Omega_{\ell})^d$ 
	is a subspace of $L^p(\Omega_{\ell},\chi_{\ell})^d$ and it holds true that 	
	$\|v\|_{L^p(\Omega_{\ell},\chi_{\ell})^d} \leq \|v \|_{L^p(\Omega_{\ell})^d}$ for 
	every $v \in L^p(\Omega_{\ell})^d$.

	For $\left(H, \ska[H]{\cdot}{\cdot}, \|\cdot\|_H \right)$ we use $L^2(\Omega)$ 
	the space of square integrable functions on $\Omega$ with the usual norm and 
	inner product. The energetic spaces $V$ and $V_{\ell}$ are then given as
	\begin{align*}
		V &= \Bigl\{ u \in H:\text{ there exists a } v = (v_1,\dots,v_d) \in 
		L^p(\Omega)^d \text{ such 
		that }\\
		& \qquad \qquad  \int_{\Omega} u D_i \varphi \diff{x} = \int_{\Omega} 
		v_i 
		\varphi \diff{x} 
		\quad \text{for all } \varphi \in C^{\infty}_0(\Omega),\ i=1,\dots,d  \Bigr\}
    = W_0^{1,p}(\Omega)
	\end{align*}
	and
	\begin{align*}
		V_{\ell} &= \Bigl\{ u \in H:\text{ there exists a } v = (v_1,\dots,v_d) \in 
		L^p(\Omega_{\ell},\chi_{\ell})^d \text{ such that }\\
		& \qquad \qquad \int_{\Omega} u D_i (\chi_{\ell} \varphi) \diff{x} = 
		\int_{\Omega_{\ell}} v_i \chi_{\ell} \varphi \diff{x} \quad\text{for all } 
		\varphi 
		\in C^{\infty}_0(\Omega),\ i=1,\dots,d   \Bigr\}, 
	\end{align*}
	which are equipped with the norms 
	\begin{align*} 
		\|\cdot\|_{V}
		= \|\cdot\|_H + \|\nabla \cdot\|_{L^p(\Omega)^d}
		\quad\text{and}\quad
		\|\cdot\|_{V_{\ell}}
		= \|\cdot\|_H + \| \nabla \cdot \|_{L^p(\Omega_{\ell},\chi_{\ell})^d}.
	\end{align*}
	For $t\in [0,T]$ we introduce the operator $A(t) \colon V \to V^*$
	\begin{align*} 
		\dualV{A(t) u}{v} = \int_{\Omega} \alpha(t) |\nabla u|^{p-2} \nabla u \cdot 
		\nabla v \diff{x},
		\quad u,v\in V.
	\end{align*}
	Together with the partition of unity we define the decomposed energetic operators 
	$A_{\ell}(t) \colon V_{\ell} \to V^*_{\ell}$, $\ell\in \{ 1,\dots,s\}$,
	\begin{align*}
		\dualVell{A_{\ell}(t) u}{v}
		= \int_{\Omega_{\ell}} \chi_{\ell} \alpha(t) |\nabla u|^{p-2} \nabla u \cdot 
		\nabla v \diff{x},
		\quad u,v\in V_{\ell}, \quad t\in [0,T].
	\end{align*}
	It is also possible to allow for more general coefficients $\alpha \colon [0,T] 
	\times \overline{\Omega} \times \R^d \to \R^d$ where $\alpha(t,\cdot,\cdot)$ 
	fulfills the condition stated in \cite[Assumption~3]{EisenmannHansen.2018}.
	
	We assume that for $g \colon (0,T) \times\Omega \to \R$ the 
	abstract function $[f(t)](x) = g(t,x)$, $(t,x) \in (0,T) \times\Omega$, is an 
	element of $L^{q}(0,T;V^*)$. We exploit that $f(t) \in V^*$, $t \in (0,T)$, can be 
	represented by
	\begin{align*}
		\dualV{f(t)}{v} = \int_{\Omega} f^0(t) v \diff{x}
		+ \sum_{i = 1}^{d} \int_{\Omega} f^i(t) D_i v \diff{x}, \quad v \in V
	\end{align*}
	where $f^i(t) \in L^q(\Omega)$ for $i \in \{0,\dots,d\}$. These functions are not 
	necessarily unique unless we exchange $V = W^{1,p}(\Omega)$ by $V = 
	W_0^{1,p}(\Omega)$, compare \cite[Theorem~10.41, 
	Corollary~10.49]{Leoni.2009}. 
	This in mind, we introduce $f_{\ell}(t)$ for a.e. $t \in (0,T)$ as
	\begin{align*}
		\dualVell{f_{\ell}(t)}{v}
		= \int_{\Omega} \chi_{\ell} f^0(t) v \diff{x}
		+ \sum_{i = 1}^{d} \int_{\Omega} \chi_{\ell} f^i(t) D_i v \diff{x},\quad v \in 
		V_{\ell}.
	\end{align*}
	
	Note that in this type of setting, we can also consider homogeniuous Direchlet 
	boundary conditions in \eqref{2eq:parabolicPDE}. Then an additional condition on 
	the partition of unity becomes necessary. In this case, we have to make the 
	further assumption that for every function $\chi_{\ell}$ there exists 
	$\varepsilon_0>0$ such that for all $\varepsilon \in (0,\varepsilon_0)$
	\begin{align*}
		\Omega_{\ell}^{\varepsilon}
		= \{ x\in \Omega_{\ell} : \chi_{\ell}(x) \geq \varepsilon \}
	\end{align*}
	is a Lipschitz domain.
\end{example}

Further examples that fit our framework are a domain decomposition scheme for 
the porous medium equation as presented in 
\cite[Section~7]{EisenmannHansen.2018} or a source term splitting 
as in \cite[Section~3.3]{Eisenmann.2019}. An application to the third equation 
of \eqref{1eq:op} is presented in \cite{Temam.1968}. Numerical experiments for  
this equation and the $p$-Laplace equation can be found in 
\cite{EisenmannHansen.2018} and \cite{HansenOstermann.2008}, respectively.

\section{Solvability and a priori bounds for the discrete scheme}
\label{sec:solvability}

The abstract setting from the previous section in mind we are now well-prepared 
to state some properties of the solution of the numerical scheme 
\eqref{2eq:semiDiscret}. Since the scheme is implicit, we start to verify that 
\eqref{2eq:semiDiscret} is uniquely 
solvable. Once this is at hand, we can provide a priori bounds of the solution. 
These bounds are a crucial part of the further analysis and allow for the 
convergence analysis in Section~\ref{sec:convergence}

\begin{lemma}\label{lem:existence}
	Let Assumptions~\ref{ass:Aell} and \ref{ass:fell} be fulfilled.
	Then the semidiscrete problem \eqref{2eq:semiDiscret} is uniquely
	solvable.
\end{lemma}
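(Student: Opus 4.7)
The plan is to prove the lemma by induction on $n$. At the base step $\U^0 = u_0 \in H$ is given, and at step $n$ the key observation is that the $s$ equations in \eqref{2eq:semiDiscret} decouple: each $\U_\ell^n$ satisfies its own elliptic-type problem
\begin{equation*}
  \U_\ell^n + ks\,\A_\ell^n \U_\ell^n = \U^{n-1} + ks\,\F_\ell^n \quad \text{in } V_\ell^*,
\end{equation*}
where $\U^{n-1} \in H \incl V_\ell^*$ via the Gelfand triple and $\F_\ell^n \in V_\ell^*$ by Assumption~\ref{ass:fell}. Once each $\U_\ell^n \in V_\ell \incl H$ is obtained, the average defining $\U^n$ is automatically well-posed in $H$, closing the induction. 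Thus the whole task reduces to solving a single nonlinear equation per $\ell$.

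For fixed $\ell$ and $n$ I would set $B_\ell \colon V_\ell \to V_\ell^*$ by $B_\ell v = v + ks\,\A_\ell^n v$, where the identity is interpreted through $V_\ell \incl H \cong H^* \incl V_\ell^*$, and invoke the Browder--Minty theorem (see, e.g., \cite[Theorem~8.3.1]{Emmrich.2004} or \cite[Theorem~26.A]{Zeidler.1990B}). To apply it I need radial continuity, strict monotonicity, boundedness, and coercivity of $B_\ell$. Radial continuity is inherited from Assumption~\ref{ass:A}(2) and the affine-linearity of the identity term; strict monotonicity follows from
\begin{equation*}
  \dualVell{B_\ell v - B_\ell w}{v-w} = \|v - w\|_H^2 + ks\,\dualVell{\A_\ell^n v - \A_\ell^n w}{v-w} \geq \|v-w\|_H^2 + ks\eta|v-w|_{V_\ell}^p,
\end{equation*}
which is strict whenever $v \neq w$; and boundedness is immediate from Assumption~\ref{ass:A}(4) combined with $\|v\|_{V_\ell^*} \lesssim \|v\|_H \lesssim \|v\|_{V_\ell}$.

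The only step that is not entirely mechanical is coercivity, because the $H$-term and the $\A_\ell^n$-term scale differently in $\|\cdot\|_{V_\ell}$: quadratically in $\|\cdot\|_H$ versus $p$-th order in the seminorm $|\cdot|_{V_\ell}$. I would exploit the bound $\|v\|_{V_\ell} \leq c_{V_\ell}(\|v\|_H + |v|_{V_\ell})$ from Assumption~\ref{ass:spaces} to split into two cases: whenever $\|v\|_{V_\ell}$ is large, either $\|v\|_H \geq \tfrac{1}{2c_{V_\ell}}\|v\|_{V_\ell}$ or $|v|_{V_\ell} \geq \tfrac{1}{2c_{V_\ell}}\|v\|_{V_\ell}$. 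In the first case the estimate
\begin{equation*}
  \dualVell{B_\ell v}{v} \geq \|v\|_H^2 + ks\bigl(\mu|v|_{V_\ell}^p - \lambda\bigr)
\end{equation*}
gives growth of order $\|v\|_{V_\ell}^2$, and in the second case growth of order $\|v\|_{V_\ell}^p$; since $p>1$, in both cases $\dualVell{B_\ell v}{v}/\|v\|_{V_\ell} \to \infty$ as $\|v\|_{V_\ell} \to \infty$.

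With all four hypotheses verified, Browder--Minty yields surjectivity of $B_\ell$, providing a solution $\U_\ell^n$ for the given right-hand side, while strict monotonicity delivers uniqueness. Running this across $\ell \in \{1,\ldots,s\}$ and defining $\U^n = \tfrac{1}{s}\sum_\ell \U_\ell^n \in H$ completes the inductive step and hence the proof.
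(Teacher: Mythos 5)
Your proof is correct and follows essentially the same route as the paper: induction over the time steps, decoupling into $s$ independent stationary problems, and an application of the Browder--Minty theorem after verifying radial continuity, strict monotonicity and coercivity of $I + ks\A_{\ell}^n$, with uniqueness coming from strict monotonicity. The only cosmetic difference is in the coercivity verification, where you use a case split on which of $\|v\|_H$ and $|v|_{V_\ell}$ dominates, whereas the paper bounds the quotient directly via a $\min(1,sk\mu)$ factor; both arguments are sound.
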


\begin{proof}
	In order to prove the existence of the elements $(\U_{\ell}^i)_{i\in 
	\{0,\dots,N\}}$, $ \ell \in \{1,\dots,s\}$, that fulfill \eqref{2eq:semiDiscret}, 
	we argue inductively. Assuming that for $i \in \{1,\dots,N\}$ the 
	previous elements	$(\U_{\ell}^j)_{j\in \{0,\dots,i-1\}}$, $\ell \in 
	\{1,\dots,s\}$, exist in the corresponding spaces, we prove the existence of 
	$\U_{\ell}^i \in V_{\ell}$ for  every $\ell \in \{1,\dots,s\}$.
	The operator $I + s k \A_{\ell}^i$, $\ell \in \{1,\dots,s\}$, is strictly 
	monotone due to (3) of Assumption~\ref{ass:A}, i.e., it holds true that
	\begin{align*}
		\dualVell{(I + s k \A_{\ell}^i)v - (I + s k \A_{\ell}^i) w }{ v - w} > 0, 
		\quad v,w \in V_{\ell} \text{ with } v \neq w
	\end{align*}
	for every $\ell \in \{1,\dots,s\}$.
	Furthermore, $I + s k \A_{\ell}^i$ is radially continuous as $A_{\ell}(t)$, $\ell 
	\in \{1,\dots,s\}$, is radially continuous for every $t \in [0,T]$. It remains to 
	verify that the operator is coercive. Using (5) of Assumption~\ref{ass:A} and 
	the norm bound of Assumption~\ref{ass:spaces}, it follows 
	\begin{align*}
		\frac{\dualVell{\big(I + s k \A_{\ell}^i\big) v}{v}}{\|v\|_{V_{\ell}}}
		&\geq \frac{ \|v\|_H^2 + sk  \mu|v|_{V_{\ell}}^p
			- sk \lambda }{ c_{V_{\ell}} \big(\|v\|_{H} + |v|_{V_{\ell}}\big)  }   \\
		&\geq \frac{\min(1,sk \mu)}{c_{V_{\ell}}} \cdot \frac{\|v\|_H^2 + 
			|v|_{V_{\ell}}^p
		}{ \|v\|_{H} + |v|_{V_{\ell}} }
		- \frac{sk \lambda}{ c_{V_{\ell}} \big(\|v\|_{H} + |v|_{V_{\ell}} \big) }
		\to \infty
	\end{align*}
  as $\|v\|_{V_{\ell}} \to \infty$ for $v \in V_{\ell}$ and $\ell \in \{1,\dots,s\}$.
	Thus, for $\U^{i-1} = \frac{1}{s} \sum_{\ell = 1}^{s} \U_{\ell}^{i-1} \in H$,
	there exists a unique solution $\U_{\ell}^i \in
	V_{\ell}$ of
	\begin{align}\label{3eq:semiEq}
		\big( I + s k \A_{\ell}^i \big) \U_{\ell}^i = s k \F_{\ell}^i + \U^{i-1}
	\end{align}
	for every $\ell \in \{1,\dots,s\}$ due Browder--Minty theorem; see
	\cite[Theorem~2.14]{Roubicek.2013} for further details.
\end{proof}

We can now turn our attention to the a priori bounds. 

\begin{lemma}\label{lem:aprioriSum}
	Let Assumptions~\ref{ass:Aell} and \ref{ass:fell} be fulfilled.
	Then for the unique solution of \eqref{2eq:semiDiscret} there exist constants 
	$M, M' < \infty$ such that for every step size $k = \frac{T}{N}$ the a priori 
	bounds
	\begin{align}\label{3eq:apriori}
		\begin{split}
			\max_{n \in \{1,\dots,N\}} \Big(\frac{1}{s} \sum_{\ell = 1}^{s}
			\|\U_{\ell}^n\|^2_H \Big)
			+ \frac{1}{s} \sum_{i = 1}^{N} \sum_{\ell = 1}^{s} \|\U_{\ell}^i -
			\U^{i-1}\|^2_H
			+ k \sum_{i = 1}^{N} \sum_{\ell =1 }^{s} \| \U_{\ell}^i\|_{V_{\ell}}^p 
			\leq M
		\end{split}
	\end{align}
	and
	\begin{align}\label{3eq:aprioriDeriv}
		\frac{1}{k} \sum_{i=1}^{N} \Big\|\frac{\U^i - \U^{i-1}}{k} \Big\|^q_{V^*} 
    = k^{1-q} \sum_{i=1}^{N} \big\|\U^i - \U^{i-1} \big\|^q_{V^*} 
    \leq M'
	\end{align}
	are fulfilled.
\end{lemma}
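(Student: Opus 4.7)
My plan is to obtain \eqref{3eq:apriori} by pairing the first line of \eqref{2eq:semiDiscret} with $\U_\ell^i$ in $\dualVi[\ell]{\cdot}{\cdot}$ and exploiting the polarization identity in $H$, and then to deduce \eqref{3eq:aprioriDeriv} from the equation itself. After testing against $\U_\ell^i$ and multiplying by $2k/s$, the identity $2(\U_\ell^i-\U^{i-1},\U_\ell^i)_H=\|\U_\ell^i\|_H^2-\|\U^{i-1}\|_H^2+\|\U_\ell^i-\U^{i-1}\|_H^2$ rewrites the time-difference contribution. Summing over $\ell$ and invoking $\U^{i-1}=\frac{1}{s}\sum_\ell \U_\ell^{i-1}$ together with Jensen's inequality $\|\U^{i-1}\|_H^2\leq\frac{1}{s}\sum_\ell\|\U_\ell^{i-1}\|_H^2$ converts the $H$-contribution into a telescoping difference of $b_i:=\frac{1}{s}\sum_\ell\|\U_\ell^i\|_H^2$, so that summing over $i=1,\dots,n$ leaves $b_n-\|u_0\|_H^2$ on the left.

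I would then apply the coercivity bound from Assumption~\ref{ass:A}(5) on each operator term, gaining $2k\mu\sum_\ell|\U_\ell^i|_{V_\ell}^p-2ks\lambda$ on the left, and bound the source pairing $2k\sum_\ell \dualVi[\ell]{\F_\ell^i}{\U_\ell^i}$ via Young's inequality combined with the seminorm estimate $\|\U_\ell^i\|_{V_\ell}\leq c_{V_\ell}(\|\U_\ell^i\|_H+|\U_\ell^i|_{V_\ell})$ from Assumption~\ref{ass:spaces}. Choosing the Young parameter small enough absorbs the produced $|\U_\ell^i|_{V_\ell}^p$-term into the coercivity contribution, leaving only a summable $\|\F_\ell^i\|_{V_\ell^*}^q$-term and a controllable $\|\U_\ell^i\|_H$-dependent contribution. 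Jensen's inequality applied to the definition \eqref{2eq:ApproxAF} of $\F_\ell^i$ together with $\|f_\ell(t)\|_{V_\ell^*}\leq\|f(t)\|_{V^*}$ from Assumption~\ref{ass:fell} yields $k\sum_i\|\F_\ell^i\|_{V_\ell^*}^q\leq\|f\|_{L^q(0,T;V^*)}^q$, and a discrete Gr\"onwall argument then delivers \eqref{3eq:apriori}; the $\|\U_\ell^i\|_{V_\ell}^p$-summand in that estimate is recovered a posteriori from the established $|\U_\ell^i|_{V_\ell}^p$-bound, the uniform $H$-bound, and the seminorm inequality. The estimate \eqref{3eq:aprioriDeriv} is then obtained by summing the $\ell$-subequations in \eqref{2eq:semiDiscret} and dividing by $s$, which gives $(\U^i-\U^{i-1})/k=\F^i-\sum_\ell\A_\ell^i\U_\ell^i$ in $V^*$ with $\F^i=\sum_\ell\F_\ell^i$; taking $V^*$-norms and using $V\incl V_\ell$ together with the boundedness from Assumption~\ref{ass:A}(4), raising to the $q$-th power (so that $(p-1)q=p$), summing in $i$ with weight $k$, and invoking the first a priori bound finishes the argument.

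The step I expect to be the most delicate is the Young estimate for the mixed $\|\F_\ell^i\|_{V_\ell^*}\|\U_\ell^i\|_H$ contribution produced by the seminorm splitting. Since Assumption~\ref{ass:fell} only gives $f\in L^q(0,T;V^*)$, a $2,2$-Young, which would cleanly produce $\|\U_\ell^i\|_H^2$, is unavailable when $p>2$ because $\|\F_\ell^i\|_{V_\ell^*}^2$ need not be summable; the $p,q$-Young instead produces an $\|\U_\ell^i\|_H^p$-term that has to be re-inserted into the Gr\"onwall. For $p\leq2$ this is closed cleanly by $\|\U_\ell^i\|_H^p\leq 1+\|\U_\ell^i\|_H^2$, but for $p>2$ one has to either trade the surplus term against the coercivity contribution using the embedding $V_\ell\incl H$ or set up a brief bootstrap on $\max_i\|\U_\ell^i\|_H$ in order to close the discrete Gr\"onwall inequality.
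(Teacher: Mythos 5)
Your plan coincides with the paper's proof in almost every step: testing with $\U_\ell^i$, the polarization identity, the Jensen/Cauchy--Schwarz bound $\|\U^{i-1}\|_H^2\leq\frac1s\sum_\ell\|\U_\ell^{i-1}\|_H^2$ to restore the telescoping structure, coercivity with the weighted Young inequality absorbing the $|\U_\ell^i|_{V_\ell}^p$-part of the source pairing, the Jensen bound $k\sum_i\|\F_\ell^i\|_{V_\ell^*}^q\leq\|f\|_{L^q(0,T;V^*)}^q$, and the derivation of \eqref{3eq:aprioriDeriv} from the summed equation together with Assumption~\ref{ass:A}~(4) and $(p-1)q=p$. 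That last part is complete and correct as you state it.

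The one genuine gap is exactly the step you flag, and neither of your two proposed repairs closes it as stated. Applying $(p,q)$-Young to $\|\F_\ell^i\|_{V_\ell^*}\|\U_\ell^i\|_H$ produces $\varepsilon\|\U_\ell^i\|_H^p$, and for $p>2$ this cannot be traded against the coercivity term: Assumption~\ref{ass:A}~(5) only yields the \emph{seminorm} $\mu|\cdot|_{V_\ell}^p$, which controls neither $\|\cdot\|_{V_\ell}^p$ nor $\|\cdot\|_H^p$ (the embedding $V_\ell\incl H$ goes in the unhelpful direction here), and a discrete Gr\"onwall inequality with $\|\U\|_H^p$ on the right against $\|\U\|_H^2$ on the left does not close for $p>2$ without extra smallness. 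The paper's resolution is simpler than a bootstrap: do \emph{not} apply Young to the $\|\U_\ell^i\|_H$-factor at all. Keep the mixed term linear, use $k\|\F_\ell^i\|_{V_\ell^*}\leq\int_{t_{i-1}}^{t_i}\|f(t)\|_{V^*}\diff{t}$ so that its sum over $i$ is bounded by $\|f\|_{L^1(0,T;V^*)}$ times $\max_n\big(\frac1s\sum_\ell\|\U_\ell^n\|_H^2\big)^{1/2}$, and then close with the elementary implication
\begin{align*}
x^2\leq 2ax+b^2 \;\Longrightarrow\; (x-a)^2\leq a^2+b^2 \;\Longrightarrow\; x\leq 2a+b,
\end{align*}
applied to $x^2=\max_n\big(\frac1s\sum_\ell\|\U_\ell^n\|_H^2+\dots\big)$. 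This avoids Gr\"onwall entirely and is insensitive to whether $p\leq 2$ or $p>2$. With that substitution your argument becomes the paper's proof; the $\|\U_\ell^i\|_{V_\ell}^p$-summand is then recovered from the $H$-bound, the $|\cdot|_{V_\ell}^p$-bound and the seminorm inequality exactly as you indicate.
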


\begin{proof}
	In the following, let $i \in  \{1,\dots,N\}$ and $\ell \in \{1,\dots,s\}$ be 
	arbitrary but fixed. Recall the identity
	\begin{align}\label{3eq:discreteDeriv}
		\ska[H]{\U_{\ell}^i - \U^{i-1} }{\U_{\ell}^i}
		= \frac{1}{2} \big( \|\U_{\ell}^i\|^2_H - \|\U^{i-1} \|^2_H
		+ \|\U_{\ell}^i - \U^{i-1} \|^2_H \big)
	\end{align}
	and the inequality $\|\cdot\|_{V_{\ell}} \leq c_1 \big( \|\cdot \|_H + 
	|\cdot |_{V_{\ell}} \big)$ with $c_1 = \max_{\ell \in \{1,\dots,s\}} 
	c_{V_{\ell}}$ stated in Assumption~\ref{ass:spaces}. Using the 
	weighted Young inequality, see \cite[Appendix~B.2.d]{Evans.1998}), we 
	obtain 
	\begin{align*}
		&\frac{1}{2k} \big( \|\U_{\ell}^i\|^2_H - \|\U^{i-1}\|^2_H
		+ \|\U_{\ell}^i - \U^{i-1} \|^2_H \big)
		+ \dualVell{s \A_{\ell}^i \U_{\ell}^i}{\U_{\ell}^i} \\
		&= \dualVell{s \F_{\ell}^i}{\U_{\ell}^i}
		\leq s c_1  \| \F_{\ell}^i\|_{V_{\ell}^*} \big( \|\U_{\ell}^i \|_H + 
		|\U_{\ell}^i |_{V_{\ell}} \big)\\
		&\leq s c_1  \| \F_{\ell}^i\|_{V_{\ell}^*} \|\U_{\ell}^i \|_H 
		+ s c_2  \| \F_{\ell}^i\|_{V_{\ell}^*}^q 
		+ \frac{s \mu}{2}  |\U_{\ell}^i |_{V_{\ell}}^p
	\end{align*}
	with $c_2 = c_1^q \frac{(p\mu)^{1-q}}{q 2^{1-q}}$. Thus, together 
	with the coercivity condition from Assumption~\ref{ass:A}~(5) it follows 
	that
	\begin{align}\label{3eq:proofapriori2}
    \begin{split}
      &\|\U_{\ell}^i\|^2_H - \|\U^{i-1}\|^2_H + \|\U_{\ell}^i - \U^{i-1} \|^2_H
      + k s \mu  |\U_{\ell}^i |_{V_{\ell}}^p \\
      &\leq 2k s c_1  \| \F_{\ell}^i\|_{V_{\ell}^*} \|\U_{\ell}^i \|_H 
      + 2 k s c_2  \| \F_{\ell}^i\|_{V_{\ell}^*}^q + 2 k s \lambda.
    \end{split}
	\end{align}
	Employing the specific structure of $\U^{i-1}$, we obtain
	\begin{align} \label{3eq:HoelderU}
    \begin{split}
      \| \U^{i-1}\|^2_H
      &= \Big\| \frac{1}{s} \sum_{\ell = 1}^{s} \U_{\ell}^{i-1}\Big\|^2_H
      \leq \frac{1}{s^2} \Big( \sum_{\ell = 1}^{s} \|\U_{\ell}^{i-1}\|_H
      \Big)^2\\
      &\leq \frac{1}{s^2} \sum_{\ell = 1}^{s} 1^2 \cdot
      \sum_{\ell = 1}^{s} \|\U_{\ell}^{i-1}\|_H^2
      = \frac{1}{s} \sum_{\ell = 1}^{s} \big\| \U_{\ell}^{i-1}\big\|^2_H
    \end{split}
	\end{align}
	for $i \in \{2,\dots,N\}$ due to the Cauchy--Schwarz inequality for sums.
	Inserting this inequality in \eqref{3eq:proofapriori2}, summing up from $\ell 
	= 1$ to $s$ as well as dividing by $s$, yields
	\begin{align*}
		&\frac{1}{s} \sum_{\ell = 1}^{s} \big( \|\U_{\ell}^i\|^2_H -
		\|\U_{\ell}^{i-1}\|^2_H + \|\U_{\ell}^i - \U^{i-1} \|^2_H \big) 
		+ k  \mu \sum_{\ell = 1}^{s}  |\U_{\ell}^i |_{V_{\ell}}^p \\
		&\leq 2k c_1 \sum_{\ell = 1}^{s} \|\F_{\ell}^i\|_{V_{\ell}^*} 
		\|\U_{\ell}^i \|_H 
		+ 2 k c_2 \sum_{\ell = 1}^{s}  \| \F_{\ell}^i\|_{V_{\ell}^*}^q 
		+ 2 k s \lambda
	\end{align*}
	for $i \in \{2,\dots,N\}$ and
	\begin{align*}
		&\frac{1}{s} \sum_{\ell = 1}^{s} \big( \|\U_{\ell}^1\|^2_H 
		+ \|\U_{\ell}^i - u_0 \|^2_H \big) 
		+ k  \mu \sum_{\ell = 1}^{s}  |\U_{\ell}^1 |_{V_{\ell}}^p \\
		&\leq \|u_0\|_H^2 + 2k c_1  \sum_{\ell = 1}^{s} \| 
		\F_{\ell}^1\|_{V_{\ell}^*} 
		\|\U_{\ell}^1 \|_H 
		+ 2 k c_2 \sum_{\ell = 1}^{s}  \| \F_{\ell}^1\|_{V_{\ell}^*}^q 
		+ 2 k s \lambda.
	\end{align*}
	After a summation from $i=1$ to $n \in \{1,\dots,N\}$ and using the 
	telescopic structure, we obtain
	\begin{align*}
		&\frac{1}{s} \sum_{\ell = 1}^{s} \|\U_{\ell}^n\|^2_H 
		+ \frac{1}{s} \sum_{i=1}^{n} \sum_{\ell = 1}^{s} \|\U_{\ell}^i - \U^{i-1} 
		\|^2_H
		+ k  \mu \sum_{i = 1}^{n} \sum_{\ell = 1}^{s}  |\U_{\ell}^i |_{V_{\ell}}^p \\
		&\leq \|u_0\|_H^2 + 2k c_1 \sum_{i = 1}^{n} \sum_{\ell = 1}^{s} \| 
		\F_{\ell}^i\|_{V_{\ell}^*} \|\U_{\ell}^i \|_H 
		+ 2 k c_2 \sum_{i = 1}^{n}\sum_{\ell = 1}^{s}  \| \F_{\ell}^i\|_{V_{\ell}^*}^q 
		+ 2 T s \lambda.
	\end{align*}
	For the right-hand side we can bound the summands using
	Assumption~\ref{ass:fell} and H\"older's inequality
	\begin{align}\label{3eq:boundF1}
    \begin{split}
      &k \sum_{i = 1}^{n} \sum_{\ell = 1}^{s} \| \F_{\ell}^i\|_{V_{\ell}^*}^{q}
      = k \sum_{i = 1}^{n} \sum_{\ell = 1}^{s} \Big\| \frac{1}{k}
      \int_{t_{i-1}}^{t_i} f_{\ell}(t) \diff{t} \Big\|_{V_{\ell}^*}^{q}\\
      &\leq  \sum_{i = 1}^{n} \sum_{\ell = 1}^{s}
      \int_{t_{i-1}}^{t_i} \| f_{\ell}(t) \|_{V_{\ell}^*}^{q} \diff{t}
      \leq s \| f \|_{L^{q}(0,T;V^*)}^{q}
    \end{split}
	\end{align}
	and
	\begin{align*}
		k \| \F_{\ell}^i\|_{V_{\ell}^*}
		\leq k \Big\| \frac{1}{k} \int_{t_{i-1}}^{t_i} f_{\ell}(t) \diff{t}
		\Big\|_{V_{\ell}^*}
		\leq \int_{t_{i-1}}^{t_i} \| f(t) \|_{V^*} \diff{t}.
	\end{align*}
	Thus, we get that
	\begin{align}\label{3eq:proofapriori3}
		\begin{split}
			&\frac{1}{s} \sum_{\ell = 1}^{s} \|\U_{\ell}^n\|^2_H 
			+ \frac{1}{s} \sum_{i = 1}^{n} \sum_{\ell = 1}^{s} \|\U_{\ell}^i - \U^{i-1} 
			\|^2_H
			+ k  \mu \sum_{i = 1}^{n} \sum_{\ell = 1}^{s}  |\U_{\ell}^i |_{V_{\ell}}^p \\
			&\leq \|u_0\|_H^2 + 2k c_1   \sum_{i = 1}^{n} 
			\int_{t_{i-1}}^{t_i} \| f(t) \|_{V^*} \diff{t} \sum_{\ell = 1}^{s} 
			\|\U_{\ell}^i 
			\|_H 
			+ 2 k s c_2 \| f \|_{L^{q}(0,T;V^*)}^{q}
			+ 2 T s \lambda. 
		\end{split}
	\end{align}
	As this is fulfilled for every $n \in \{1,\dots,N\}$, it also follows that
	\begin{align*}
		&\max_{n \in \{1,\dots,N\}} \Big(\frac{1}{s} \sum_{\ell = 1}^{s} 
		\|\U_{\ell}^n\|^2_H 
		+ \frac{1}{s} \sum_{i=1}^{n} \sum_{\ell = 1}^{s} \|\U_{\ell}^i - \U^{i-1} 
		\|^2_H
		+ k  \mu \sum_{i = 1}^{n} \sum_{\ell = 1}^{s}  |\U_{\ell}^i 
		|_{V_{\ell}}^p\Big) \\
		&\leq \|u_0\|_H^2 + 2k s c_1 \| f \|_{L^1(0,T;V^*)} 
		\max_{n \in \{1,\dots,N\}} \Big(\frac{1}{s}\sum_{\ell = 1}^{s} 
		\|\U_{\ell}^n\|_H^2\Big)^{\frac{1}{2}} \\
		 &\qquad + 2 k s c_2 \| f \|_{L^{q}(0,T;V^*)}^q + 2 T s \lambda. 
	\end{align*}
	We abbreviate the terms 
	\begin{align*}
		x^2 &= \max_{n \in \{1,\dots,N\}} \Big(\frac{1}{s} \sum_{\ell = 1}^{s} 
		\|\U_{\ell}^n\|^2_H 
		+ \frac{1}{s} \sum_{i=1}^{n} \sum_{\ell = 1}^{s} \|\U_{\ell}^i - \U^{i-1} 
		\|^2_H + k  \mu \sum_{i = 1}^{n} \sum_{\ell = 1}^{s}  |\U_{\ell}^i 
		|_{V_{\ell}}^p\Big)\\
		a &= k s c_1 \| f \|_{L^1(0,T;V^*)}\\
		b^2 &= \|u_0\|_H^2 + 2 k s c_2 \| f \|_{L^{q}(0,T;V^*)}^{q}
		+ 2 T s \lambda
	\end{align*}
	to obtain $x^2 \leq 2ax + b^2$. This implies, in particular, that
  \begin{align*}
    (x-a)^2 = x^2 -2ax +a^2 \leq a^2 +b^2.
  \end{align*}
  Taking the square root on both sides, this yields
  \begin{align*}
    |x-a| \leq \sqrt{ a^2 +b^2} \leq a + b.
  \end{align*}
  As $x-a \leq |x-a|$ is fulfilled, we obtain $x \leq 2a + b$ after adding $a$ to 
  both sides of the inequality. This shows that
	\begin{align*}
		\max_{n \in \{1,\dots,N\}} \Big(\frac{1}{s} \sum_{\ell = 1}^{s} 
		\|\U_{\ell}^n\|^2_H 
		+ \frac{1}{s} \sum_{i=1}^{n} \sum_{\ell = 1}^{s} \|\U_{\ell}^i - \U^{i-1} 
		\|^2_H + k  \mu \sum_{i = 1}^{n} \sum_{\ell = 1}^{s}  |\U_{\ell}^i 
		|_{V_{\ell}}^p\Big) \leq M_1,
	\end{align*} 
	where $M_1 \geq 0$ is independent of $k$.  Using the norm inequality from 
	Assumption~\ref{ass:spaces}, this implies that there exists $M_2 \geq 0$, 
	which does not depend on $k$, such that
	\begin{align*}
		\Big(k \sum_{i = 1}^{N} \sum_{\ell = 1}^{s}  \|\U_{\ell}^i 
		\|_{V_{\ell}}^p\Big)^{\frac{1}{p}}
		\leq c_1 \Big(k \sum_{i = 1}^{N} \sum_{\ell = 1}^{s}  \|\U_{\ell}^i 
		\|_H^p\Big)^{\frac{1}{p}}
		+ c_1 \Big(k \sum_{i = 1}^{N} \sum_{\ell = 1}^{s}  |\U_{\ell}^i 
		|_{V_{\ell}}^p\Big)^{\frac{1}{p}}
		\leq M_2.
	\end{align*}
	Altogether, we have proved the first a priori bound \eqref{3eq:apriori}.

	In order to prove \eqref{3eq:aprioriDeriv}, we test \eqref{2eq:semiDiscret} 
	with $v \in V$ and use Assumption~\ref{ass:A}~(4) to see that
	\begin{align*}
		\skaB[H]{\frac{\U^i - \U^{i-1}}{k}}{v}
		&= \frac{1}{s}\sum_{\ell = 1}^{s} \skaB[H]{\frac{\U_{\ell}^i - \U^{i-1}}{k}}{v} 
		\\
		&= \sum_{\ell = 1}^{s} \big( \dualVell{\F_{\ell}^i}{v} - \dualVell{\A_{\ell}^i
			\U_{\ell}^i }{v} \big)\\
		&\leq c_3 \|v\|_{V} \Big(\sum_{\ell = 1}^{s} \| \F_{\ell}^i\|_{V_{\ell}^*}
		+ \beta \sum_{\ell = 1}^{s} \big(1 + \|\U_{\ell}^i \|_{V_{\ell}}^{p-1} 
		\big)\Big)
	\end{align*}
	for $i \in \{1,\dots,N\}$, where $c_3$ is the maximal embedding constant of 
	$V$ into $V_{\ell}$ for $\ell \in \{1,\dots,s\}$.
	Thus, we can estimate the $V^*$-norm by
	\begin{align*}
		k^{-1} \big\|\U^i - \U^{i-1} \big\|_{V^*}
		\leq  c_3\Big(\sum_{\ell = 1}^{s} \| \F_{\ell}^i\|_{V_{\ell}^*}
		+ \beta \sum_{\ell = 1}^{s} \big(1 + \|\U_{\ell}^i \|_{V_{\ell}}^{p-1} 
		\big)\Big).
	\end{align*}
	This bound can be used to see that there exists $M' \geq 0$ such that
	\begin{align*}
		&\Big(k^{1-q} \sum_{i=1}^{N} \big\|\U^i - \U^{i-1} 
		\big\|^q_{V^*}\Big)^{\frac{1}{q}}\\
		&\leq c_3\Big(k \sum_{i=1}^{N} \Big(\sum_{\ell = 1}^{s} \|   
		\F_{\ell}^i\|_{V_{\ell}^*} + \beta \sum_{\ell = 1}^{s} \big(1 + \|\U_{\ell}^i 
		\|_{V_{\ell}}^{p-1} \big) \Big)^q\Big)^{\frac{1}{q}}\\
		&\leq  c_3\Big(k \sum_{i=1}^{N} \sum_{\ell = 1}^{s} \| 
		\F_{\ell}^i\|_{V_{\ell}^*}^q \Big)^{\frac{1}{q}}
		+  c_3 \beta (Ts)^{\frac{1}{q}}
		+  c_3 \beta \Big(k \sum_{i=1}^{N} \sum_{\ell = 1}^{s} \|\U_{\ell}^i 
		\|_{V_{\ell}}^p \Big)^{\frac{1}{q}}
		\leq M'.
	\end{align*}
	Due to the first a priori bound \eqref{3eq:apriori} and \eqref{3eq:boundF1} 
	the constant $M'$ is independent of $k$.
\end{proof}

\section{Convergence analysis} \label{sec:convergence}

In the following, we introduce prolongations of the solution of the discrete 
problem \eqref{2eq:semiDiscret} to the interval $[0,T]$. The main goal of this 
section is to prove that the sequence of such prolongations converges to the 
exact solution $u$ of \eqref{1eq:PDE}. 
Corresponding to the grid $0 = t_0 < t_1 < \dots <t_N = T$ with $k = 
\frac{T}{N}$ and $t_n = n k$, $n \in \{0,\dots,N\}$, we construct piecewise 
constant and piecewise linear functions on the interval $[0,T]$.
We consider the piecewise constant functions for $t \in 
(t_{n-1},t_n]$, $n \in \{1,\dots,N\}$, and $\ell \in \{1,\dots,s\}$ given by
\begin{align} \label{4eq:defConstInt}
	U^k_{\ell}(t) = \U_{\ell}^n, \quad
	U^k(t) = \U^n, \quad
	A^k_{\ell}(t) = \A_{\ell}^n, \quad \text{and} \quad
	f^k_{\ell}(t) = \F_{\ell}^n
\end{align}
as well as the piecewise linear function
\begin{align} \label{4eq:defLinInt}
	\tilde{U}^k(t) = \U^{n-1} + \frac{t - t_{n-1}}{k}
	(\U^{n} - \U^{n-1})
\end{align}
with $U^k_{\ell}(0) = U^k(0) = \tilde{U}^k(0) = u_0$, $ A^k_{\ell}(0) = 
\A_{\ell}^1$ and $f^k_{\ell}(0) = \F_{\ell}^1$.
As we consider step sizes $k = \frac{T}{N}$ for $N \in \N$, we denote
the sequences $\big(U^{\frac{T}{N}}_{\ell} \big)_{N \in \N}$ as
$(U^k_{\ell} )_{k >0}$ for $\ell \in \{1,\dots,s\}$ in the following to keep the
notation more compact.
The same simplification in notation is used for the other functions
introduced above. Due to the a priori bound \eqref{3eq:apriori} we see that
\begin{align*}
	U^k_{\ell} \in L^p(0,T;V_{\ell}) \cap L^{\infty}(0,T;H), \
	U^k, \tilde{U}^k \in L^{\infty}(0,T;H), \ \text{and}
	\
	f^k_{\ell} \in L^q(0,T;V_{\ell}^*).
\end{align*}
Furthermore, due to Lemma~\ref{lemma:ABochner} the operator $A_{\ell}^k$
maps the space $L^p(0,T;V_{\ell})$ into $L^q(0,T;V_{\ell}^*)$.
Using the prolongations introduced above, we can state a discrete version of 
the differential equation. We first note that after summing up 
\eqref{2eq:semiDiscret} from $1$ to $s$ and dividing by $s$, we obtain
\begin{align*}
	\frac{1}{ks}\sum_{\ell = 1}^{s} \big( \U_{\ell}^n - \U^{n-1} \big)
	+ \sum_{\ell = 1}^{s} \A_{\ell}^n \U_{\ell}^n
	= \sum_{\ell = 1}^{s} \F_{\ell}^n \quad \text{in }V^*.
\end{align*}
Thus, we see that
\begin{align}
	\begin{split} \label{4eq:pdeSemi}
		\begin{cases}
			(\tilde{U}^k)'(t)
			+ \sum_{\ell = 1}^{s} A^k_{\ell}(t) U_{\ell}^k(t)
			= \sum_{\ell = 1}^{s} f^k_{\ell}(t)
			\quad &\text{in }V^*,
			\quad t \in (0, T),\\
			U^k(0) = \tilde{U}^k(0) = u_0 \quad &\text{in }H,
		\end{cases}
	\end{split}
\end{align}
where $(\tilde{U}^k)'$ is the weak derivative of $\tilde{U}^k$. In the 
following, we will consider the limiting process of all the
appearing terms to connect to the original problem \eqref{2eq:pde} with 
\eqref{4eq:pdeSemi}.

\begin{lemma} \label{lem:ConvergenceA}
	Let Assumption~\ref{ass:Aell} be fulfilled and let $W\in L^p(0, T;V)$ be
	given.
	For $\ell \in \{1,\dots,s\}$ it follows that $A_{\ell}^k(t) W(t) \to A_{\ell}(t) 
	W(t)$ in $V_{\ell}^*$ as $k \to 0$ for a.e. $t\in (0,T)$. Furthermore, it holds 
	true that $A_{\ell}^k W \to A_{\ell} W$ in $L^q(0,T;V_{\ell}^*)$ as $k \to 0$.
\end{lemma}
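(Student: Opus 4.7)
The plan is to deduce the pointwise convergence from the continuity property in Assumption~\ref{ass:A}~(1), applied to $A_\ell$, and then upgrade to $L^q(0,T;V_\ell^*)$ convergence via Lebesgue's dominated convergence theorem, with the dominating function coming from the growth bound in Assumption~\ref{ass:A}~(4).

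First, define the grid projection $\tau^k\colon (0,T] \to (0,T]$ by $\tau^k(t)=t_n$ whenever $t\in(t_{n-1},t_n]$, so that $A_\ell^k(t)=A_\ell(\tau^k(t))$ by~\eqref{4eq:defConstInt}. Clearly $|\tau^k(t)-t|\le k$, so $\tau^k(t)\to t$ as $k\to 0$ for every $t\in (0,T]$. Since $W\in L^p(0,T;V)$, there is a full-measure set on which $W(t)\in V$, and by Assumption~\ref{ass:spaces} we have $V\subset V_\ell$. Hence for a.e. $t\in (0,T)$ the element $W(t)$ is admissible as an argument of $A_\ell(\cdot)$, and Assumption~\ref{ass:A}~(1) (applied to the operator family $\{A_\ell(t)\}_{t\in[0,T]}$) yields
\begin{align*}
  A_\ell^k(t) W(t)=A_\ell(\tau^k(t))W(t)\longrightarrow A_\ell(t)W(t)\quad\text{in }V_\ell^*
\end{align*}
as $k\to 0$.

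For the second claim I would invoke dominated convergence on the real-valued functions $t\mapsto \|A_\ell^k(t)W(t)-A_\ell(t)W(t)\|_{V_\ell^*}^q$, which converge to $0$ pointwise a.e. by the first part. Assumption~\ref{ass:A}~(4), applied to $A_\ell$, gives
\begin{align*}
  \|A_\ell^k(t)W(t)-A_\ell(t)W(t)\|_{V_\ell^*}^q \le \bigl(2\beta(1+\|W(t)\|_{V_\ell}^{p-1})\bigr)^q
  \le C\bigl(1+\|W(t)\|_{V_\ell}^{p}\bigr),
\end{align*}
using $(p-1)q=p$ and a generic constant $C$. Because $V\hookrightarrow V_\ell$ continuously, $W\in L^p(0,T;V)\subset L^p(0,T;V_\ell)$, so the right-hand side is integrable and provides an admissible dominating function. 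Lebesgue's theorem then delivers $A_\ell^k W\to A_\ell W$ in $L^q(0,T;V_\ell^*)$.

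The only subtlety — and the point I would be most careful about — is making explicit that $W(t)\in V\subset V_\ell$ a.e. so that both $A_\ell(\tau^k(t))W(t)$ and $A_\ell(t)W(t)$ are well defined, and that the embedding $V\hookrightarrow V_\ell$ guarantees $\|W(\cdot)\|_{V_\ell}\in L^p(0,T)$ for the dominating bound. Everything else is a routine application of Assumptions~\ref{ass:spaces} and \ref{ass:A}.
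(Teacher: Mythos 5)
Your argument is correct and follows essentially the same route as the paper: pointwise convergence in $V_{\ell}^*$ from the continuity condition in Assumption~\ref{ass:A}~(1) applied to $A_{\ell}$, and then dominated convergence with the integrable majorant $C\bigl(1+\|W(\cdot)\|_{V_{\ell}}^{p}\bigr)$ supplied by the growth bound (4) and the embedding $V\incl V_{\ell}$. The paper's proof is just a terser version of the same argument, so no further comment is needed.
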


\begin{proof}
	Let $\ell \in \{1,\dots,s\}$ and $\varepsilon> 0$ be arbitrary. Due to the 
	continuity condition on $A_{\ell}$, for almost every $t \in (0,T)$ we find 
	$\delta >0$ such that for all $k < \delta$ it follows that
	\begin{align*}
		\| A_{\ell}^k(t) W(t) - A_{\ell}(t) W(t) \|_{V^*}
		= \| A_{\ell}(t_n) W(t) - A_{\ell}(t) W(t) \|_{V^*}
		\leq \varepsilon,
	\end{align*}
	where $t$ is within an interval $(t_{n-1},t_n]$, $n \in \{1,\dots,N\}$.
	The second assertion of the lemma is a consequence of Lebesgue's 
	theorem of dominated convergence and the boundedness condition (4) 
	from Assumption~\ref{ass:A}.
\end{proof}

\begin{lemma} \label{lem:ConvergenceF}
	Let Assumption~\ref{ass:fell} be fulfilled. Then it follows that $f_{\ell}^k \to
	f_{\ell}$ in $L^{q}(0,T;V_{\ell}^*)$, $\ell \in \{1,\dots,s\}$, as $k \to 0$.
\end{lemma}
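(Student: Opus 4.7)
The plan is to recognize $f_\ell^k$ as the image of $f_\ell$ under the standard piecewise-constant averaging operator $T_k \colon L^q(0,T;V_\ell^*) \to L^q(0,T;V_\ell^*)$ defined by $(T_k g)(t) = \frac{1}{k}\int_{t_{n-1}}^{t_n} g(s)\diff{s}$ for $t\in(t_{n-1},t_n]$, and then to prove the convergence $T_k g \to g$ in $L^q(0,T;V_\ell^*)$ via a standard density/three-epsilon argument.

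First, I would establish uniform boundedness of $T_k$ with operator norm at most one. For $t\in(t_{n-1},t_n]$ Jensen's inequality applied to the convex function $x\mapsto \|x\|_{V_\ell^*}^q$ on the Banach space $V_\ell^*$ gives
\begin{equation*}
\|f_\ell^k(t)\|_{V_\ell^*}^q = \Big\|\tfrac{1}{k}\int_{t_{n-1}}^{t_n} f_\ell(s)\diff{s}\Big\|_{V_\ell^*}^q \leq \tfrac{1}{k}\int_{t_{n-1}}^{t_n}\|f_\ell(s)\|_{V_\ell^*}^q \diff{s}.
\end{equation*}
Integrating over $(t_{n-1},t_n]$, summing over $n\in\{1,\dots,N\}$, and taking the $q$-th root yields $\|f_\ell^k\|_{L^q(0,T;V_\ell^*)} \leq \|f_\ell\|_{L^q(0,T;V_\ell^*)}$.

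Next, for $\varepsilon > 0$ I would invoke the density of $C([0,T];V_\ell^*)$ in $L^q(0,T;V_\ell^*)$ (since $V_\ell^*$ is a Banach space and continuous functions are dense in the corresponding Bochner $L^q$-space) to pick a continuous $g \in C([0,T];V_\ell^*)$ with $\|f_\ell - g\|_{L^q(0,T;V_\ell^*)} < \varepsilon$. Writing
\begin{equation*}
f_\ell^k - f_\ell = T_k(f_\ell - g) + (T_k g - g) + (g - f_\ell),
\end{equation*}
the first and third terms are each bounded in $L^q(0,T;V_\ell^*)$ by $\varepsilon$ thanks to the contractivity of $T_k$. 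For the middle term, uniform continuity of $g$ on the compact interval $[0,T]$ gives $\sup_{t\in[0,T]}\|(T_k g)(t) - g(t)\|_{V_\ell^*} \to 0$ as $k\to 0$, so its $L^q$-norm tends to zero. Hence $\limsup_{k\to 0}\|f_\ell^k - f_\ell\|_{L^q(0,T;V_\ell^*)} \leq 2\varepsilon$, and since $\varepsilon$ was arbitrary the claim follows.

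The only mildly delicate point is the density of continuous Bochner functions in $L^q(0,T;V_\ell^*)$; this is standard (see, e.g., \cite[Chapter~II]{DiestelUhl.1977}) and does not require $V_\ell^*$ to have any special structure such as the Radon--Nikodym property, because one only needs to approximate in norm, not to represent derivatives. Everything else is a routine consequence of Jensen's inequality and uniform continuity on a compact interval, so no serious obstacle is anticipated.
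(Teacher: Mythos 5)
Your proposal is correct and is exactly the density argument the paper sketches in two sentences: convergence for continuous functions via uniform continuity, plus a uniform ($L^q$-contraction) bound on the averaging operator from Jensen's inequality to transfer the result to all of $L^q(0,T;V_\ell^*)$. You have simply filled in the details the paper leaves implicit, so no comparison is needed.
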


\begin{proof} 
	The statement above can easily be verified for a function from the space 
	$C([0,T] ;V_{\ell}^*)$. As the space $C([0,T] ;V_{\ell}^*)$ is a dense subspace 
	of $L^{q}(0,T;V_{\ell}^*)$ for $\ell \in \{1,\dots,s\}$ a density argument can 
	be used to verify the claimed statement.
\end{proof}

\begin{lemma}\label{lem:ConvSubSequenceS}
	Let Assumptions~\ref{ass:Aell} and \ref{ass:fell} be fulfilled.
	Then there exists a subsequence  $(k_{i})_{i \in \N}$ of step sizes
	$k_i = \frac{T}{N_i}$ and $U \in \W$ such that
	\begin{align*}
		U_{\ell}^{k_i} \weak U \quad \text{in } L^{p}(0, T; V_{\ell})
		\quad \text{and} \quad
		U_{\ell}^{k_i} \weaks U, \quad U^{k_i} \weaks U \quad \text{in } 
		L^{\infty}(0,T; H),
	\end{align*}
	as well as
	\begin{align*}
		\tilde{U}^{k_i} \weaks U \quad \text{in } L^{\infty}(0, T; H)
		\quad \text{and} \quad
		(\tilde{U}^{k_i})' \weak U' \quad \text{in } L^q(0, T;V^*)
	\end{align*}
	for every $\ell \in \{ 1,\dots, s \}$ as $i \to \infty$. Here, $U'$ denotes 
	the weak derivative of $U$.
\end{lemma}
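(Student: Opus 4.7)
The plan is to combine the a priori bounds from Lemma~\ref{lem:aprioriSum} with standard weak compactness (Banach--Alaoglu and reflexivity) to extract convergent subsequences, and then use the $H$-norm bounds on the jumps $\U_\ell^n-\U^{n-1}$ to verify that all resulting weak limits coincide with a single element $U$ which, moreover, lies in $\W$.

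First, observe that \eqref{3eq:apriori} makes each sequence $(U_\ell^k)_{k>0}$ bounded in the reflexive space $L^p(0,T;V_\ell)$ and in $L^\infty(0,T;H)\cong(L^1(0,T;H))^*$; analogously $(U^k)_{k>0}$ and $(\tilde U^k)_{k>0}$ are bounded in $L^\infty(0,T;H)$, and \eqref{3eq:aprioriDeriv} makes $((\tilde U^k)')_{k>0}$ bounded in $L^q(0,T;V^*)$. A standard diagonal extraction across $\ell\in\{1,\dots,s\}$ then yields a single subsequence $(k_i)$ along which $U_\ell^{k_i}$ converges weakly in $L^p(0,T;V_\ell)$ to some limit $U_\ell$, $U_\ell^{k_i}$ and $U^{k_i}$ converge weakly-* in $L^\infty(0,T;H)$ to $U_\ell$ and some $U^*$, $\tilde U^{k_i}$ converges weakly-* in $L^\infty(0,T;H)$ to some $\tilde U$, and $(\tilde U^{k_i})'$ converges weakly in $L^q(0,T;V^*)$ to some $g$.

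The key step is to identify all these limits. On $(t_{n-1},t_n]$ one has the pointwise identity
\begin{equation*}
  \tilde U^k(t)-U^k(t) = -\frac{t_n-t}{k}\bigl(\U^n-\U^{n-1}\bigr),
\end{equation*}
and $\U^n-\U^{n-1}=\tfrac{1}{s}\sum_\ell(\U_\ell^n-\U^{n-1})$, so by Cauchy--Schwarz
$\|\U^n-\U^{n-1}\|_H^2\le \tfrac{1}{s}\sum_\ell\|\U_\ell^n-\U^{n-1}\|_H^2$. Together with $\|\U_\ell^n-\U^n\|_H\le\|\U_\ell^n-\U^{n-1}\|_H+\|\U^n-\U^{n-1}\|_H$ and the bound \eqref{3eq:apriori}, a direct integration yields
\begin{equation*}
  \|\tilde U^{k}-U^{k}\|_{L^2(0,T;H)}^2 \le \tfrac{k}{3}\sum_{n=1}^{N}\|\U^n-\U^{n-1}\|_H^2 \le \tfrac{Mk}{3}, \qquad \|U_\ell^{k}-U^{k}\|_{L^2(0,T;H)}^2 \le C s k,
\end{equation*}
so both differences converge to $0$ strongly in $L^2(0,T;H)$. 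Since $L^\infty(0,T;H)\hookrightarrow L^2(0,T;H)$ and weak-* limits in $L^\infty(0,T;H)$ restrict to weak limits in $L^2(0,T;H)$, this forces $U_\ell=U^*=\tilde U$ for every $\ell$; call this common limit $U$.

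It remains to place $U$ in $\W$. Since $U_\ell^{k_i}\weak U$ in $L^p(0,T;V_\ell)$ for each $\ell$, the limit $U$ belongs to $\bigcap_\ell L^p(0,T;V_\ell)$, which by Assumption~\ref{ass:spaces} (norm equivalence $\sum_\ell\|\cdot\|_{V_\ell}\sim\|\cdot\|_V$ and $V=\bigcap_\ell V_\ell$) equals $L^p(0,T;V)$. Finally, for any $\varphi\in C_c^\infty(0,T)$ and $v\in V$, passing to the limit in $\int_0^T\dualX[V]{(\tilde U^{k_i})'(t)}{v}\varphi(t)\diff t = -\int_0^T(\tilde U^{k_i}(t),v)_H\varphi'(t)\diff t$ identifies $g$ as the distributional (hence weak) derivative $U'\in L^q(0,T;V^*)$; therefore $U\in\W$.

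The main obstacle is the identification step: without the bound $\sum_n\sum_\ell\|\U_\ell^n-\U^{n-1}\|_H^2\le sM$ in \eqref{3eq:apriori} (which is \emph{not} multiplied by $k$), one could not conclude that $U^k$, $U_\ell^k$ and $\tilde U^k$ produce the same weak limit, and the whole scheme would fail to be consistent with a single limiting function $U$.
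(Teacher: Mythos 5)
Your proof is correct, and it reaches the identification of the limits by a genuinely different route than the paper. The paper proves $U_1=\dots=U_s$ by going back to the scheme itself: it writes $\U_{1}^n-\U_{2}^n = ks(\F_1^n-\A_1^n\U_1^n)-ks(\F_2^n-\A_2^n\U_2^n)$, bounds this in $V^*$ using the growth condition on $A_\ell$ and H\"older's inequality, and concludes $\|U_1^k(t)-U_2^k(t)\|_{V^*}\to 0$ pointwise, finishing with dominated convergence in $L^1(0,T;V^*)$; the difference $U^k-\tilde U^k$ is handled separately via the $V^*$ a priori bound \eqref{3eq:aprioriDeriv}. You instead exploit the middle term of \eqref{3eq:apriori}, namely $\tfrac{1}{s}\sum_{i,\ell}\|\U_\ell^i-\U^{i-1}\|_H^2\leq M$ (crucially without a factor of $k$), to get \emph{strong} $L^2(0,T;H)$ convergence of all the differences $U_\ell^k-U^k$ and $\tilde U^k-U^k$ at the explicit rate $O(k^{1/2})$, and then identify the weak-$*$ limits through the embedding $L^\infty(0,T;H)\incl L^2(0,T;H)$. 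Your argument is more elementary (it never touches the operator bounds or Lebesgue's theorem in this step) and gives quantitative information the paper's does not; the paper's argument has the mild advantage of working entirely in $V^*$ and of not depending on which term of the energy estimate survives without a $k$-weight. The remaining steps — compactness from reflexivity and Banach--Alaoglu, $U\in L^p(0,T;V)$ from $\bigcap_\ell V_\ell=V$ and the norm equivalence, and the identification of the weak derivative by testing against $v\varphi$ with $\varphi\in C_c^\infty(0,T)$ — coincide with the paper's. Your closing observation is also accurate: if the jump term in \eqref{3eq:apriori} carried a factor $k$, your identification step would collapse, and one would have to fall back on the paper's $V^*$-based argument.
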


\begin{proof}
	In the following proof, we do not distinguish between subsequences by 
	notation. Using Lemma~\ref{lem:aprioriSum}, we obtain that
	\begin{align*}
		&\| U_{\ell}^{k}\|_{L^{\infty}(0,T;H)}^2
		\leq s M,\\
		&\| U^{k}\|_{L^{\infty}(0,T;H)}^2
		= \| \tilde{U}^{k}\|_{L^{\infty}(0,T;H)}^2
		\leq M,\\
		&\| U_{\ell}^{k}\|_{L^p(0,T;V_{\ell})}^p = k \sum_{i = 1}^{N} \|
		\U_{\ell}^i\|_{V_{\ell}}^p
		\leq  M,\\
		&\big\| (\tilde{U}^{k})' \big\|_{L^q(0,T;V^*)}^q = k^{1-q} 
		\sum_{i=1}^{N}
		\big\|\U^n - \U^{n-1} \big\|^q_{V^*}
		\leq M'.
	\end{align*}
	Therefore, the sequence $(U_{\ell}^{k})_{k >0}$ is bounded in
	$L^p(0, T;V_{\ell})$ as well as $L^{\infty}(0, T;H)$,
	$(\tilde{U}^{k})_{k >0}$ is bounded in $L^{\infty}(0, T;H)$, and
	$\big( (\tilde{U}^{k})' \big)_{k >0}$ is bounded in $L^q(0, T;V^*)$.
	Since $L^p(0, T;V_{\ell})$ is a reflexive Banach space and
	$L^{\infty}(0, T;H)$ is the dual space of the separable Banach space
	$L^1(0, T;H)$, there exists a subsequence of $(U_{\ell}^{k})_{k>0}$ and
	$U_{\ell} \in  L^p(0, T;V_{\ell}) \cap L^{\infty}(0, T;H)$ such that
	\begin{align*}
		U_{\ell}^{k} \weak U_{\ell} \quad \text{in } L^p(0,T;V_{\ell})
		\quad \text{and} \quad
		U_{\ell}^{k} \weaks U_{\ell} \quad \text{in } L^{\infty}(0, T;H)
	\end{align*}
	as $ k \to 0$. Analogously, there exist a suitable further subsequence, 
	$\tilde{U} \in L^{\infty}(0, T;H)$ and $W \in L^q(0, T; V^*)$ such that
	\begin{align*}
		\tilde{U}^{k} \weaks \tilde{U} \quad \text{in } L^{\infty}(0, T;H)
		\quad \text{and} \quad
		(\tilde{U}^{k})' \weak W \quad \text{in } L^q(0, T;V^*)
	\end{align*}
	as $k \to 0 $.
	In the following, we prove that $U_{1} = \dots = U_{s} =: U$ is
	fulfilled. As $\bigcap_{\ell = 1}^{s} V_{\ell}  = V$ and the norm $\sum_{\ell = 
	1}^{s}\|\cdot\|_{V_{\ell}}$ is equivalent to $\|\cdot\|_V$ this implies that $U 
	\in L^p(0,T;V)$.
	We show that $U_{1} = U_{2}$ in $L^p(0,T;V_1\cap V_2) \cap 
	L^{\infty}(0,T;H)$, the 
	other equalities follow in an analogous manner. 
	We can write  
	\begin{align*}
		U_{1}^{k}(t) - U_{2}^{k}(t)
		&= \U_{1}^n - \U^{n-1} - (\U_{2}^n - \U^{n-1})\\
		&= k s \big( \F_{1}^n - \A_{1}^n \U_{1}^n \big)
		- k s \big( \F_{2}^n - \A_{2}^n \U_{2}^n \big)\\
		&= s \int_{t_{n-1}}^{t_n} \big(\big( f_{1}(\tau) - A_{1}(\tau)
		U_{1}^k(\tau) \big) - \big(  f_{2}(\tau) - A_{2}(\tau)
		U_{2}^k(\tau)  \big)\big) \diff{\tau}
	\end{align*}
	for $t\in (t_{n-1},t_n]$, $n\in \{1,\dots,N\}$,  as $U_1^{k}(t_{n-1}) = \U^{n-1} 
	= U_2^{k}(t_{n-1})$ holds true by the construction of our scheme.
	Therefore, we obtain
	\begin{align*}
		\| U_1^{k}(t) - U_2^{k}(t) \|_{V^*}
		&\leq s \sum_{\ell  = 1}^2  \int_{t_{n-1}}^{t_n} \| f_{\ell}(\tau) - 
		A_{\ell}(\tau)
		U_{\ell}^{k}(\tau)\|_{V^*} \diff{\tau}\\
		&\leq s k^{\frac{1}{p}} \sum_{\ell  = 1}^2 \Big( \int_{t_{n-1}}^{t_n}
		\|f_{\ell}(\tau) - A_{\ell}(\tau) U_{\ell}^{k}(\tau)\|_{V^*}^{q}
		\diff{\tau}\Big)^{\frac{1}{q}},
	\end{align*}
	where we used H\"older's inequality in the last step. We can bound the
	integrals by
	\begin{align*}
		&\Big(\int_{t_{n-1}}^{t_n} \|f_{\ell}(\tau) - A_{\ell}(\tau)
		U_{\ell}^{k}(\tau)\|_{V^*}^{q} \diff{\tau}\Big)^{\frac{1}{q}}\\
		&\leq \Big(\int_{t_{n-1}}^{t_n} \|f_{\ell}(\tau)\|_{V^*}^{q} 
		\diff{\tau}\Big)^{\frac{1}{q}}
		+ \Big(\int_{t_{n-1}}^{t_n} \|A_{\ell}(t_n)
		\U_{\ell}^{n}\|_{V^*}^{q} \diff{\tau}\Big)^{\frac{1}{q}}\\
		&\leq \|f_{\ell}\|_{L^q(0,T;V_{\ell}^*)}
		+ k^{\frac{1}{q}} \beta \big(1 + \|\U_{\ell}^{n}\|_{V_{\ell}}^{p-1} \big),
	\end{align*}
	for $\ell \in \{1,2\}$ which is bounded independently of $n$ and $k$ due to 
	the a priori bound \eqref{3eq:apriori}.
	Thus, it follows that $\| U_1^{k}(t) - U_2^{k}(t) \|_{V^*} \to 0$ as $k \to 0$ 
	for every $t \in [0,T]$ and it is bounded by a  constant independent of $t$. Using
	Lebesgue's theorem of dominated convergence and the fact that $L^q(0,T;V^*) \incl 
	L^1(0,T;V^*)$, it follows that
	\begin{align*}
		\| U_{1}^k - U_{2}^k \|_{L^1(0,T;V^*)}
		\leq c_1 \| U_{1}^k - U_{2}^k \|_{L^q(0,T;V^*)} \to 0 \quad \text{as } k \to 0  
	\end{align*}
	for $c_1 \in (0,\infty)$.
	This shows that $U_1 - U_2 = 0$ in $L^1(0,T;V^*)$. By assumption the 
	embedding $L^p(0,T;V_1\cap V_2) \cap L^{\infty}(0,T;H) \incl  
	L^1(0,T;V^*)$ is continuous.
	The injectivity of the embedding operator implies that both $U_1 - U_2 = 
	0$ in $L^p(0,T;V_1 \cap V_2)$ and $U_1 - U_2 = 0$ in $L^{\infty}(0,T;H)$.
	The limits $U$ and $\tilde{U}$ coincide in $L^1(0,T;V^*)$ since
	\begin{align*}
		\| U_1^k - U_2^k \|_{L^1(0,T;V^*)}^q
		&\leq c_1^q \int_{0}^{T} \| U^k(t) - \tilde{U}^k(t) \|_{V^*}^q \diff{t}\\
		&= \sum_{n=1}^{N} \int_{t_{n-1}}^{t_n} \Big\| \U^{n} - \U^{n-1} -
		\frac{t - t_{n-1}}{k} (\U^{n} - \U^{n-1}) \Big\|_{V^*}^q \diff{t}\\
		&=  \frac{1}{k^q} \sum_{n=1}^{N} \| \U^{n} - \U^{n-1}\|_{V^*}^q
		\int_{t_{n-1}}^{t_n} (t_n - t)^q  \diff{t}\\
		&=  \frac{k}{q} \sum_{n=1}^{N} \| \U^{n} - \U^{n-1}\|_{V^*}^q
		\leq  \frac{k^q}{q} M' \to 0
		\quad \text{ as } k \to 0,
	\end{align*}
	where we used the a priori bound \eqref{3eq:aprioriDeriv}. Making use of 
	the continuous embedding $L^{\infty}(0,T;H) \incl L^1(0,T;V^*)$, it follows that 
	$U$ and $\tilde{U}$ coincide in $L^{\infty}(0,T;H)$.

	Last, we prove that the limit $W \in L^q(0,T;V^*)$ is the weak
	derivative of $U$. To this end, let $v \in V$ and $\varphi \in
	C_c^{\infty}(0,T)$ be arbitrary. Using $\tilde{U}^k \weaks U$ in 
	$L^{\infty}(0,T;H)$ as $k \to 0$, it yields
	\begin{align*}
		-\int_{0}^{T} \dualV{W(t) }{v} \varphi(t) \diff{t}
		&= -\lim_{k \to 0} \int_{0}^{T}
		\dualV{(\tilde{U}^k)'(t) }{v} \varphi(t) 
		\diff{t}\\
		&= \lim_{k \to 0} \int_{0}^{T} \ska[H]{\tilde{U}^k(t) }{v} \varphi'(t) 
		\diff{t}
		= \int_{0}^{T} \ska[H]{U(t) }{v} \varphi'(t) \diff{t}.
	\end{align*}
	Applying \cite[Kapitel~IV, Lemma~1.7]{GGZ.1974}, we obtain $W = U'$ in 
	$L^q(0,T;V^*)$.
\end{proof}

The next lemma is an auxiliary result to identify the limit from the previous
lemma with the solution of \eqref{2eq:pde}.

\begin{lemma} \label{lem:Minity}
	Let Assumption~\ref{ass:Aell} be fulfilled and let the operator $A_{\ell}^k 
	\colon L^p(0,T;V_{\ell}) \to L^q(0,T;V_{\ell}^*)$ be given as in
	\eqref{4eq:defConstInt}. Then for a sequence $\big( W_{\ell}^k
	\big)_{k >0}$ in $L^{p}(0,T ;V_{\ell})$, $\ell \in \{1,\dots,s\}$, and an
	element $W \in L^{p}(0,T ;V)$ such that
	\begin{align*}
		W_{\ell}^k \weak W
		\quad\text{in } L^{p}(0,T ;V_{\ell})
		\quad \text{and}\quad
		A_{\ell}^k W_{\ell}^k \weak B_{\ell}
		\quad\text{in } L^{q}(0,T ;V_{\ell}^*)
	\end{align*}
	for every $\ell \in \{1,\dots,s\}$ as $k \to 0$ and $\sum_{\ell = 1}^{s}
	B_{\ell} = B \in L^q(0,T;V^*)$ with
	\begin{align}\label{4eq:mintyAss}
		\limsup_{k \to 0} \sum_{\ell = 1}^{s} \int_{0}^{T}
		\dualVell{A_{\ell}^k(t) W_{\ell}^k(t)}{W_{\ell}^k(t)} \diff{t}
		\leq \int_{0}^{T} \dualV{B(t) }{W(t)} \diff{t}
	\end{align}
	it follows that $ AW = \sum_{\ell = 1}^{s} A_{\ell} W= \sum_{\ell = 1}^{s} 
	B_{\ell} = B$ in $L^q(0,T;V^*)$.
\end{lemma}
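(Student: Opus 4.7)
The plan is to apply the classical Minty--Browder monotonicity trick, carried out simultaneously across the $s$ operators and then aggregated via the sum property.

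First, fix an arbitrary test function $Z \in L^p(0,T;V)$. Since $V \incl V_\ell$ continuously for each $\ell$, $Z$ lies in $L^p(0,T;V_\ell)$ as well. Monotonicity of each $A_\ell^k(t)$ (Assumption~\ref{ass:A}(3), transferred to the Nemytskii level by Lemma~\ref{lemma:ABochner}) gives
\begin{equation*}
  \sum_{\ell=1}^{s}\int_{0}^{T}\dualVell{A_\ell^k(t)W_\ell^k(t)-A_\ell^k(t)Z(t)}{W_\ell^k(t)-Z(t)}\diff t \;\geq\; 0.
\end{equation*}
Expanding the bilinear form produces four terms, which I will handle separately as $k\to 0$.

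For the diagonal term $\sum_\ell\int\dualVell{A_\ell^k W_\ell^k}{W_\ell^k}$, the hypothesis \eqref{4eq:mintyAss} supplies a limsup bound by $\int_0^T\dualV{B(t)}{W(t)}\diff t$. For the cross term $\sum_\ell\int\dualVell{A_\ell^k W_\ell^k}{Z}$, the weak convergence $A_\ell^k W_\ell^k \weak B_\ell$ in $L^q(0,T;V_\ell^*)$ yields the limit $\sum_\ell\int\dualVell{B_\ell}{Z}$; since $Z(t)\in V$ and $\sum_\ell B_\ell = B$ in $V^*$, this equals $\int_0^T\dualV{B(t)}{Z(t)}\diff t$. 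For the other two terms involving $A_\ell^k Z$, Lemma~\ref{lem:ConvergenceA} gives strong convergence $A_\ell^k Z \to A_\ell Z$ in $L^q(0,T;V_\ell^*)$; combined with $W_\ell^k \weak W$ in $L^p(0,T;V_\ell)$ and the sum property $\sum_\ell A_\ell Z = AZ$ in $V^*$, the limits become $\int_0^T\dualV{AZ(t)}{W(t)}\diff t$ and $\int_0^T\dualV{AZ(t)}{Z(t)}\diff t$ respectively. Assembling everything yields
\begin{equation*}
  \int_{0}^{T}\dualV{B(t)-A(t)Z(t)}{W(t)-Z(t)}\diff t \;\geq\; 0 \quad\text{for all } Z\in L^p(0,T;V).
\end{equation*}

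The final step is the standard radial-continuity argument: substitute $Z=W-\tau Y$ for arbitrary $Y\in L^p(0,T;V)$ and $\tau\in(0,1]$, divide by $\tau>0$, and let $\tau\to 0^+$. Radial continuity of $A(t)$ together with the uniform bound from Assumption~\ref{ass:A}(4) and dominated convergence in the Nemytskii setting (Lemma~\ref{lemma:ABochner}) give
\begin{equation*}
  \int_{0}^{T}\dualV{B(t)-A(t)W(t)}{Y(t)}\diff t \;\geq\; 0,
\end{equation*}
and replacing $Y$ by $-Y$ forces equality, so $B=AW$ in $L^q(0,T;V^*)$.

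The main obstacle, as I see it, is the bookkeeping at the crossover between the local $V_\ell^*$--$V_\ell$ dualities and the global $V^*$--$V$ duality: one has to insist that the test function $Z$ lives in the intersection space $V$ in order to invoke the sum property $\sum_\ell A_\ell Z = AZ$, and simultaneously to pair the weak limits $B_\ell\in V_\ell^*$ into the single element $B\in V^*$. Everything else (dominated convergence for the strong $A_\ell^k Z\to A_\ell Z$, the weak--strong pairing, and the Minty trick itself) is standard once this is set up correctly.
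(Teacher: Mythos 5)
Your proposal is correct and follows essentially the same route as the paper's proof: the monotonicity inequality tested against an arbitrary $Z\in L^p(0,T;V)$, passage to the limit via the weak convergences, the strong convergence $A_\ell^k Z\to A_\ell Z$ from Lemma~\ref{lem:ConvergenceA}, and the limsup hypothesis \eqref{4eq:mintyAss}, followed by the radial-continuity (Minty) step with $Z=W\mp\theta Y$. The only cosmetic difference is that the paper keeps the inequality in the form $\int_0^T\dualV{B(t)}{W(t)}\diff{t}\geq\int_0^T\big(\dualV{B(t)}{X(t)}+\dualV{A(t)X(t)}{W(t)-X(t)}\big)\diff{t}$ rather than collecting it into a single monotone pairing, which is equivalent.
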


\begin{proof}
	Due to the monotonicity of $A_{\ell}(t)$, $t \in [0,T]$ and $\ell \in 
	\{1,\dots,s\}$,
	we can write for every $X \in L^p(0,T;V)$
	\begin{align*}
		\sum_{\ell = 1}^{s} \int_{0}^{T}
		\dualVell{A_{\ell}^k(t) W_{\ell}^k(t)
			- A_{\ell}^k(t) X(t) } {W_{\ell}^k(t) - X(t) } \diff{t} 
		\geq 0.
	\end{align*}
	Thus, using Lemma~\ref{lem:ConvergenceA} it follows that
	\begin{align*}
		&\sum_{\ell = 1}^{s} \int_{0}^{T} \dualVell{
			A_{\ell}^k(t) W_{\ell}^k(t)}{W_{\ell}^k(t)} \diff{t}\\
		&\geq \sum_{\ell = 1}^{s} \int_{0}^{T}
		\big( \dualVell{ A_{\ell}^k(t) W_{\ell}^k(t)}{ X(t) } +
		\dualVell{ A_{\ell}^k(t) X(t) }{W_{\ell}^k(t) - X(t)} \big) \diff{t}\\
		&\stackrel{k \to 0}{\longrightarrow}
		\sum_{\ell = 1}^{s} \int_{0}^{T}
		\big( \dualVell{B_{\ell}(t)}{ X(t) } + \dualVell{A_{\ell}(t) X(t) }{ W(t) - 
		X(t) }
		\big) \diff{t}\\
		&= \int_{0}^{T} \big( \dualV{B(t)}{ X(t) }
		+\dualV{A(t) X(t) }{ W(t) - X(t) } \big) \diff{t}.
	\end{align*}
	This implies
	\begin{align*}
		&\liminf_{k \to 0} \sum_{\ell = 1}^{s} \int_{0}^{T}
		\dualVell{ A_{\ell}^k(t)
			W_{\ell}^k(t)}{W_{\ell}^k(t)} \diff{t}\\
		&\geq \int_{0}^{T} \big(\dualV{B(t)}{ X(t) }
		+ \dualV{ A(t) X(t) }{ W(t) - X(t)} \big) \diff{t}.
	\end{align*}
	Applying \eqref{4eq:mintyAss}, this yields
	\begin{align*}
		&\int_{0}^{T} \dualV{B(t) }{W(t)} \diff{t} \\
		&\geq \int_{0}^{T} \big( \dualV{B(t)}{ X(t) }
		+ \dualV{A(t) X(t) }{W(t) - X(t) } \big) \diff{t}.
	\end{align*}
	The assertion of the lemma follows by the Minty monotonicity trick, 
	where $X =  W \pm \theta \tilde{X}$ for $\theta \in [0,1]$ and 
	$\tilde{X} \in L^p(0,T; V)$ is inserted in the inequality. 
	Dividing by $\theta$ and considering $\theta \to 0$ then yields $A W= B$ 
	in $L^q(0,T;V^*)$. See, e.g., \cite[Lemma~2.13]{Roubicek.2013} for further 
	details.
\end{proof}

Combining the prior lemmas, we can now state one of the main results of
this section. We prove that the limit of the sequence of prolongations 
is the solution of \eqref{2eq:pde}.

\begin{theorem}\label{thm:limitWeak}
	Let Assumptions~\ref{ass:Aell} and \ref{ass:fell} be fulfilled. Let $u$ be
	the solution of \eqref{2eq:pde}. Then for step sizes $k = \frac{T}{N}$ the
	sequences $(U_{\ell}^k)_{k>0}$, $(U^k)_{k>0}$ and  $(\tilde{U}^k)_{k>0}$ 
	defined in \eqref{4eq:defConstInt} and \eqref{4eq:defLinInt}, respectively, 
	fulfill
	\begin{alignat}{2}
		\label{4eq:convLp}
		U_{\ell}^k &\weak u \quad &&\text{in }
		L^p(0,T;V_{\ell}), \\
		\label{4eq:convH}
		U^k(t)&\weak u(t) \quad
		&&\text{in } H, \\
		\label{4eq:convLinf}
		U^k \weaks u ,\quad
		\tilde{U}^k &\weaks u \quad
		&&\text{in } L^{\infty}(0,T;H), \\
		\label{4eq:convDeriv}
		(\tilde{U}^k)' &\weak u' \quad
		&&\text{in } L^q(0,T;V^*),\\
		\label{4eq:convAu}
		\sum_{\ell = 1}^{s} A_{\ell}^k U_{\ell}^k &\weak A u \quad
		&&\text{in } L^q(0,T;V^*)
	\end{alignat}
	as $k \to 0$ for $t\in [0,T]$ and $\ell \in \{1,\dots,s\}$.
\end{theorem}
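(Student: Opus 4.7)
The plan is to extract subsequential weak limits via Lemma~\ref{lem:ConvSubSequenceS}, pass to the limit in the semi-discrete equation~\eqref{4eq:pdeSemi}, apply the Minty monotonicity trick (Lemma~\ref{lem:Minity}) to identify the weak limit with the solution of~\eqref{2eq:pde}, and finally invoke uniqueness together with a standard subsequence argument to upgrade to full-sequence convergence.

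First, Lemma~\ref{lem:ConvSubSequenceS} supplies a subsequence (still denoted by $k$) and $U \in \W$ along which \eqref{4eq:convLp}--\eqref{4eq:convDeriv} hold with $u$ replaced by $U$. The a priori bound~\eqref{3eq:apriori} combined with the growth condition of Lemma~\ref{lemma:ABochner} shows that each $(A_\ell^k U_\ell^k)_{k>0}$ is bounded in $L^q(0,T;V_\ell^*)$, so after a further extraction there exist $B_\ell \in L^q(0,T;V_\ell^*)$ with $A_\ell^k U_\ell^k \weak B_\ell$. Setting $B = \sum_{\ell=1}^{s} B_\ell \in L^q(0,T;V^*)$ and passing to the limit in~\eqref{4eq:pdeSemi}, with Lemma~\ref{lem:ConvergenceF} supplying the strong convergence of $f_\ell^k$, yields $U' + B = f$ in $L^q(0,T;V^*)$. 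The initial condition $U(0) = u_0$ is obtained by a standard integration by parts: testing $(\tilde U^k)'$ against a $C^1$-function equal to $1$ at $0$ and vanishing at $T$, and passing to the limit using $\tilde U^k(0) = u_0$ together with $\tilde U^k \weaks U$ in $L^\infty(0,T;H)$ and $(\tilde U^k)' \weak U'$ in $L^q(0,T;V^*)$.

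The hard part will be verifying the limsup hypothesis~\eqref{4eq:mintyAss} of Lemma~\ref{lem:Minity}. I would test~\eqref{2eq:semiDiscret} with $\U_\ell^n$, apply the polarization identity~\eqref{3eq:discreteDeriv}, sum over $\ell \in \{1,\dots,s\}$ and $n \in \{1,\dots,N\}$, and use the Jensen-type bound~\eqref{3eq:HoelderU} in the form $\|\U^{n-1}\|_H^2 \leq \tfrac{1}{s}\sum_{\ell=1}^{s}\|\U_\ell^{n-1}\|_H^2$ (valid for $n \geq 2$, while the $n=1$ contribution uses $\U^0 = u_0$) so that the negative cross terms telescope cleanly. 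Discarding the non-negative quadratic variation terms $\|\U_\ell^n - \U^{n-1}\|_H^2$ produces
\begin{align*}
  \sum_{\ell=1}^{s} \int_0^T \dualVell{A_\ell^k U_\ell^k}{U_\ell^k} \diff t
  \leq \sum_{\ell=1}^{s} \int_0^T \dualVell{f_\ell^k}{U_\ell^k} \diff t
  + \tfrac{1}{2}\|u_0\|_H^2 - \tfrac{1}{2s}\sum_{\ell=1}^{s}\|U_\ell^k(T)\|_H^2.
\end{align*}
On the right, the first term converges to $\int_0^T \dualV{f}{U} \diff t$ by Lemma~\ref{lem:ConvergenceF} and~\eqref{4eq:convLp}. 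For the last term I would extract a further subsequence with $U_\ell^k(T) \weak g_\ell$ in $H$, combine $\tilde U^k(T) \weak U(T)$ (which comes from the standard weak continuity in $H$ enjoyed by functions in $\W$) with the pointwise bound $\|U_\ell^k(T) - U_1^k(T)\|_{V^*} \to 0$ already established in the proof of Lemma~\ref{lem:ConvSubSequenceS}, and thereby identify $g_1 = \dots = g_s = U(T)$; weak lower semi-continuity of $\|\cdot\|_H$ then yields $\limsup_{k\to 0} \bigl(-\tfrac{1}{2s}\sum_{\ell=1}^{s}\|U_\ell^k(T)\|_H^2\bigr) \leq -\tfrac{1}{2}\|U(T)\|_H^2$. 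The continuous chain rule applied to $U' + B = f$ rewrites the resulting upper bound as $\int_0^T \dualV{B}{U} \diff t$, establishing~\eqref{4eq:mintyAss}. Lemma~\ref{lem:Minity} therefore gives $B = AU$, so $U$ solves~\eqref{2eq:pde}.

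By uniqueness of the variational solution of~\eqref{2eq:pde}, $U = u$. Since every subsequence of the original sequences admits a further subsequence converging to the same limit $u$ in each of the topologies listed in~\eqref{4eq:convLp}--\eqref{4eq:convAu}, the standard subsequence principle lifts convergence to the full sequences. For the pointwise statement~\eqref{4eq:convH} I would use the embedding $\W \incl C([0,T];H)$ to obtain $\tilde U^k(t) \weak u(t)$ in $H$ for every $t \in [0,T]$, and then transfer this to $U^k(t)$ via the pointwise bound $\|U^k(t) - \tilde U^k(t)\|_{V^*} \to 0$ that follows from~\eqref{3eq:aprioriDeriv} exactly as at the end of the proof of Lemma~\ref{lem:ConvSubSequenceS}.
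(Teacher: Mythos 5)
Your proposal follows the paper's proof essentially step for step: subsequence extraction via Lemma~\ref{lem:ConvSubSequenceS}, identification of $U'=f-B$ in the limit of \eqref{4eq:pdeSemi}, verification of the limsup hypothesis \eqref{4eq:mintyAss} by testing the scheme with $\U_{\ell}^n$, telescoping with \eqref{3eq:discreteDeriv} and \eqref{3eq:HoelderU}, Minty's trick, uniqueness, and the subsequence principle. The small variations (identifying the weak limits of the $U_{\ell}^k(T)$ individually instead of passing to $\|U^k(T)\|_H^2$ via \eqref{3eq:HoelderU}, and obtaining the initial condition from a single test function) are harmless.

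The one step whose justification does not hold as written concerns the pointwise convergence \eqref{4eq:convH}. You invoke the ``standard weak continuity in $H$ enjoyed by functions in $\W$'' to conclude $\tilde U^k(t)\weak u(t)$ in $H$ for each $t$, and likewise $\tilde U^k(T)\weak U(T)$ in the limsup estimate. But $\tilde U^k$ does not belong to $\W$: its nodal values $\U^n=\frac1s\sum_{\ell=1}^{s}\U_{\ell}^n$ are averages of elements of the different spaces $V_{\ell}$ and in general lie only in $H$, not in $V=\bigcap_{\ell=1}^{s}V_{\ell}$, so $\tilde U^k\notin L^p(0,T;V)$ and the trace argument for $\W$ is not available (the paper records exactly this by placing $\tilde U^k$ only in $L^\infty(0,T;H)$). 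This is why the paper identifies the pointwise weak limits $y_t$ of $U^k(t)$ through an integration-by-parts identity against $\varphi\in C([0,T])\cap C^1(0,T)$ rather than through the embedding. Your step is repairable with tools you already have: write $\tilde U^k(t)=u_0+\int_0^t(\tilde U^k)'(\tau)\diff{\tau}$ in $V^*$, use $(\tilde U^k)'\weak U'$ in $L^q(0,T;V^*)$ together with $U(0)=u_0$ to get $\tilde U^k(t)\weak U(t)$ in $V^*$, and upgrade to weak convergence in $H$ via the uniform $H$-bound and the density of $V$ in $H$. As stated, however, the appeal to $\W\incl C([0,T];H)$ is a gap, and it also underlies your identification $g_1=\dots=g_s=U(T)$.
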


\begin{proof}
	In the following, we will not distinguish between different subsequences by 
	notation. 
	Due to Lemma~\ref{lemma:ABochner} as well as the a priori bound
	\eqref{3eq:apriori} there exists a constant $\tilde{M} > 0$ such that for every 
	$k>0$, $t \in [0,T]$, and $\ell \in \{1,\dots,s\}$
	\begin{align*}
		\| A_{\ell}^k U_{\ell}^k \|_{L^{q}(0, T;V_{\ell}^*)} \leq \tilde{M}
		\quad \text{and} \quad
		\| U^k(t) \|_H
		\leq \tilde{M}
	\end{align*}
	is fulfilled. Therefore, we can extract a subsequence of step sizes such 
	that there exits $B_{\ell} \in L^{q}(0, T;V_{\ell}^*)$ and $y_t \in H$ with
	\begin{align}\label{4eq:LimitProof2}
		A_{\ell}^k U_{\ell}^k \weak B_{\ell} \quad \text{in }
		L^{q}(0, T;V_{\ell}^*)
		\quad \text{and} \quad
		U^k(t) \weak y_t \quad
		\text{in } H
	\end{align}
	as $k \to 0$ for $\ell \in \{1,\dots,s\}$. In the following, we abbreviate $B := 
	\sum_{\ell  = 1}^{s} B_{\ell}$.
	Next we identify the derivative of $U$ with equation \eqref{2eq:pde}. Due to 
	Lemma~\ref{lem:ConvergenceF} and Lemma~\ref{lem:ConvSubSequenceS}, 
	the following equality holds true
	\begin{align*}
		U'
		= \wlim_{k \to 0} (\tilde{U}^k)'
		= \wlim_{k \to 0} \sum_{\ell = 1}^{s} \big(f_{\ell}^k - A_{\ell}^k
		U_{\ell}^k\big)
		= f - B \quad \text{in } L^q(0,T;V^*),
	\end{align*}
	where $\wlim$ denotes the weak limit. The limit $U$ obtained in 
	Lemma~\ref{lem:ConvSubSequenceS} is an element of $\W \incl C([0, T];H)$. 
	Thus, we can work with the continuous representative of $U$ in the following.
	This in mind, we prove $y_{t} = U(t)$ and $u_0 = U(0)$ for $t \in [0,T]$. To
	this end, let $ v\in V$ and $\varphi  \in C([0,T]) \cap C^1(0,T)$ be arbitrary.
	Recalling the equation for the time discrete values \eqref{4eq:pdeSemi}, we 
	can write
	\begin{align*}
		& \ska[H]{U(t)}{v} \varphi(t) - \ska[H]{U(0)}{v} \varphi(0)
		- \int_{0}^{t} \ska[H]{U(\tau)}{v} \varphi'(\tau) \diff{\tau}\\
		& = \int_{0}^{t} \dualV{U'(\tau)}{v} \varphi(\tau) \diff{\tau}\\
		& = \sum_{\ell = 1}^{s}\int_{0}^{t} \dualVell{f_{\ell}(\tau) -
			B_{\ell}(\tau)}{v} \varphi(\tau) \diff{\tau}\\
		& = \lim_{k \to 0}
		\Big(\int_{0}^{t_n} \dualV{ (\tilde{U}^k)' (\tau)
			+ \sum_{\ell = 1}^{s} \big( A_{\ell}^k(\tau) U_{\ell}^k(\tau)
			- B_{\ell} (\tau)\big) }  {v} \varphi(\tau) \diff{\tau}\\
		&\quad -\sum_{\ell = 1}^{s}\int_{t}^{t_n} \dualVell{f_{\ell}^k(\tau) -
			B_{\ell}(\tau)}{v} \varphi(\tau) \diff{\tau} \Big)
	\end{align*}
	for $t \in (t_{n-1},t_n]$, $n \in \{1,\dots,N\}$.
	Applying integration by parts and the fact that the linear and the
	constant interpolations always coincide at the grid points then shows that
	\begin{align*}
		&\int_{0}^{t_n} \dualV{ 
			(\tilde{U}^k)'(\tau)}{v} \varphi(\tau)
		\diff{\tau}\\
		&= \skab[H]{U^k(t_n)}{v}
		\varphi(t) - \skab[H]{U^k(0)}{v} \varphi(0) -
		\int_{0}^{t_n} \skab[H]{\tilde{U}^k(\tau)}{v} \varphi'(\tau)
		\diff{\tau}.
	\end{align*}
	Recall that in Lemma~\ref{lem:ConvSubSequenceS} we have proved that 
	$\tilde{U}^k \weaks U$ in $L^{\infty}(0,T;H)$ as $k \to 0$.   
	Therefore, \eqref{4eq:LimitProof2} and the fact that $(f_{\ell}^k - 
	B_{\ell})_{k>0}$ is a bounded sequence in $L^q(0,T;V_{\ell}^*)$ for every $\ell 
	\in \{1,\dots,s\}$ shows that
	\begin{align*}
		& \ska[H]{U(t)}{v} \varphi(t) - \ska[H]{U(0)}{v} \varphi(0)
		- \int_{0}^{t} \ska[H]{U(\tau)}{v} \varphi'(\tau) \diff{\tau}\\
		& = \ska[H]{y_t}{v} \varphi(t) - \ska[H]{u_0}{v} \varphi(0)
		- \int_{0}^{t} \ska[H]{U(\tau)}{v} \varphi'(\tau) \diff{\tau},
	\end{align*}
	is fulfilled. This implies $U(t) = y_t$ and $U(0) = u_0$ for every $t \in [0,T]$. 
	
	It remains to prove that $B  = A U$ is fulfilled. To this end, we use
	Lemma~\ref{lem:Minity}. Applying Lemma~\ref{lem:ConvSubSequenceS}, it 
	follows that $U_{\ell}^k \weak U$ in $L^p(0, T;V_{\ell})$ as $k \to 0$ for 
	$\ell \in \{1,\dots,s\}$. Further, in \eqref{4eq:LimitProof2}, we have seen that 
	$A_{\ell}^k U_{\ell}^k \weak B_{\ell}$ in $L^{q}(0, T;V_{\ell}^*)$ as $k \to 0$ 
	for every $\ell \in \{1,\dots,s\}$.  Therefore, we still have to verify
	\begin{align*}
		\limsup_{k \to 0} \sum_{\ell = 1}^{s} \int_{0}^{T}
		\dualVell{A_{\ell}^k(t) U_{\ell}^k(t)}{U_{\ell}^k(t)} \diff{t}
		\leq \int_{0}^{T} \dualV{B(t) }{U(t)} \diff{t}
	\end{align*}
	in order to apply Lemma~\ref{lem:Minity}.
	To this end, we test the semidiscrete problem \eqref{2eq:semiDiscret} 
	with $\U_{\ell}^n$ for $\ell \in \{1,\dots,s\}$ and $n\in \{1,\dots,N\}$ to 
	obtain that
	\begin{align*}
		\dualVell{\U_{\ell}^n - \U^{n-1} + s k \A_{\ell}^n
			\U_{\ell}^n}{\U_{\ell}^n} = \dualVell{sk \F_{\ell}^n}{\U_{\ell}^n}.
	\end{align*}
	Summing up the equation form $\ell = 1$ to $s$, dividing by $s$, and
	applying the identity from \eqref{3eq:discreteDeriv}, it follows that
	\begin{align*}
		\frac{1}{2s} \sum_{\ell = 1}^{s} \big( \|\U_{\ell}^n
		\|_H^2 - \| \U^{n-1}\|_H^2\big)
		+ k \sum_{\ell = 1}^{s} \dualVell{\A_{\ell}^n \U_{\ell}^n}{\U_{\ell}^n}
		\leq k \sum_{\ell = 1}^{s} \dualVell{\F_{\ell}^n}{\U_{\ell}^n}.
	\end{align*}
	After another summation for $n = 1$ to $N$ and an application of  and
	\eqref{3eq:HoelderU}, we can rewrite this inequality to
	\begin{align*}
		\frac{1}{2s} \sum_{\ell = 1}^{s} \|\U_{\ell}^N \|_H^2 - \| u_0 \|_H^2
		+ k \sum_{n=1}^{N} \sum_{\ell = 1}^{s} \dualVell{\A_{\ell}^n
			\U_{\ell}^n}{\U_{\ell}^n}
		\leq k \sum_{n=1}^{N} \sum_{\ell = 1}^{s} \dualVell{\F_{\ell}^n}{\U_{\ell}^n},
	\end{align*}
	due to a telescopic sum structure. Inserting the definition of the 
	prolongations from \eqref{4eq:defConstInt} and \eqref{4eq:defLinInt}, we 
	see that
	\begin{align*}
		&\frac{1}{2s} \sum_{\ell = 1}^{s}\big( \|U_{\ell}^k(T) \|_H^2 - \| u_0 \|_H^2 
		\big)
		+\sum_{\ell = 1}^{s} \int_{0}^{T} \dualVell{ A_{\ell}^k(t)
			U_{\ell}^k(t)}{U_{\ell}^k(t)} \diff{t} \\
		&\leq \sum_{\ell = 1}^{s} \int_{0}^{T} \dualVell{
			f_{\ell}(t) }{U_{\ell}^k(t)} \diff{t}.
	\end{align*}
	Together with \eqref{3eq:HoelderU} and the weak lower semicontinuity
	of the norm this yields
	\begin{align*}
		& \limsup_{k \to 0} \sum_{\ell = 1}^{s} \int_{0}^{T}
		\dualVell{A_{\ell}^k(t) U_{\ell}^k(t)}{U_{\ell}^k(t)} \diff{t} \\
		&\leq \limsup_{k \to 0} \Big(\sum_{\ell = 1}^{s} \int_{0}^{T} \dualVell{
			f_{\ell}^k(t)}{U_{\ell}^k(t)} \diff{t}
		- \frac{1}{2s} \sum_{\ell = 1}^{s} \big(\|U_{\ell}^k(T) \|_H^2
		- \| u_0\|_H^2\big)\Big)\\
		&\leq \limsup_{k \to 0} \Big(\sum_{\ell = 1}^{s} \int_{0}^{T} \dualVell{
			f_{\ell}^k(t)}{U_{\ell}^k(t)} \diff{t}
		- \frac{1}{2} \| U^k(T) \|_H^2
		+ \frac{1}{2} \| u_0\|_H^2\big)\Big)\\
		&\leq \sum_{\ell = 1}^{s} \int_{0}^{T} \dualVell{f_{\ell}(t) }{U(t)} \diff{t}
		- \frac{1}{2} \| U(T) \|_H^2 +  \frac{1}{2} \| u_0\|_H^2\\
		&= \int_{0}^{T} \dualV{f(t) }{U(t)} \diff{t} -
		\int_{0}^{T} \dualV{U'(t)}{U(t)} \diff{t}.
	\end{align*}
	Therefore, we have proved that
	\begin{align*}
		\limsup_{k \to 0} \sum_{\ell = 1}^{s} \int_{0}^{T}
		\dualVell{A_{\ell}^k(t) U_{\ell}^k(t)}{U_{\ell}^k(t)} \diff{t}
		\leq \int_{0}^{T} \dualV{B(t) }{U(t)} \diff{t}.
	\end{align*}
	Applying Lemma~\ref{lem:Minity}, this verifies that $B = AU$ is fulfilled in
	$L^q(0,T;V^*)$. Thus, $U$ is a variational solution to the original 
	problem \eqref{2eq:pde}. 
	As this problem has a unique solution $u \in \W$, it follows that $U = u$.
	
	Next, we argue that the original sequence $(U_{\ell}^k)_{k>0}$
	converges weakly to the unique solution $u$ of \eqref{2eq:pde} in
	$L^p(0,T;V_{\ell})$ for every $\ell \in \{1,\dots,s\}$.
	The arguments above show that every converging subsequence of the
	bounded sequence $(U_{\ell}^k)_{k>0}$ has the limit $u$. Applying the
	subsequence principle, see, e.g, \cite[Proposition~10.13]{Zeidler.1986},
	yields that the entire sequence converges to this limit which proves
	\eqref{4eq:convLp}.
	An analogous argumentation shows that
	\eqref{4eq:convH}--\eqref{4eq:convDeriv} hold true for the original
	sequence. To prove \eqref{4eq:convAu}, we recall \eqref{4eq:convDeriv} and 
	the statement of Lemma~\ref{lem:ConvergenceF}. Inserting these two 
	limiting process in \eqref{4eq:pdeSemi} yields \eqref{4eq:convAu}.
\end{proof}

\begin{theorem}\label{thm:limitStrong}
	Let Assumptions~\ref{ass:Aell} and \ref{ass:fell} be fulfilled.
	Then for step sizes $k = \frac{T}{N}$ the sequence $(U^k)_{k>0}$ defined 
	in  \eqref{4eq:defConstInt} fulfills
	\begin{align*}
		U^k(t) \to u(t) \quad \text{in }H
		\quad \text{as } k \to 0, \quad \text{for } t \in [0,T],
	\end{align*}
	where $u$ is the solution \eqref{2eq:pde}. If $\eta$ in 
	Assumption~\ref{ass:A}~(3) is strictly positive then the sequence 
	$(U_{\ell}^k)_{k>0}$ converges strongly to $u$ in $L^p(0,T;V_{\ell})$ for 
	$\ell \in \{1,\dots,s\}$.
\end{theorem}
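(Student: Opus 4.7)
The plan is to upgrade the weak convergences from Theorem~\ref{thm:limitWeak} to strong ones by combining energy identities with the monotonicity framework already used there. For the first assertion it suffices to prove $\|U^k(t)\|_H\to\|u(t)\|_H$ at each $t$, since in the Hilbert space $H$ this combined with the weak convergence \eqref{4eq:convH} yields strong convergence. For the second assertion ($\eta>0$), I would apply strict monotonicity directly against the exact solution $u$ to control the seminorm $|U_\ell^k-u|_{V_\ell}$ in $L^p(0,T)$, and then upgrade to the full norm $\|U_\ell^k-u\|_{V_\ell}$ via the first assertion and Assumption~\ref{ass:spaces}.

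For the pointwise $H$-convergence, fix $t\in[0,T]$ and choose $n(k)$ with $t\in(t_{n(k)-1},t_{n(k)}]$, so that $U^k(t)=\U^{n(k)}$. Testing \eqref{2eq:semiDiscret} with $\U_\ell^i$, using \eqref{3eq:discreteDeriv}, summing over $\ell$, summing from $i=1$ to $n(k)$, and invoking \eqref{3eq:HoelderU} yields the discrete energy inequality
\begin{equation*}
\tfrac{1}{2}\|U^k(t)\|_H^2 \leq \tfrac{1}{2}\|u_0\|_H^2 + \sum_{\ell=1}^{s}\int_0^{t_{n(k)}}\dualVell{f_\ell^k}{U_\ell^k}\diff{\tau} - \sum_{\ell=1}^{s}\int_0^{t_{n(k)}}\dualVell{A_\ell^k U_\ell^k}{U_\ell^k}\diff{\tau}.
\end{equation*}
Taking the limit superior, the $f$-integral tends to $\int_0^t\dualV{f}{u}\diff{\tau}$ by Lemma~\ref{lem:ConvergenceF} combined with weak--strong duality, after absorbing the indicator $\chi_{[0,t_{n(k)}]}$ into the strongly convergent factor. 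The $A$-integral is bounded below by $\int_0^t\dualV{Au}{u}\diff{\tau}$ via the same Minty-type argument used in the proof of Theorem~\ref{thm:limitWeak} with test element $u$, adapted to the subinterval $[0,t]$ and using Lemma~\ref{lem:ConvergenceA}. Combined with the continuous energy identity for $u$, this yields $\limsup\|U^k(t)\|_H^2\leq\|u(t)\|_H^2$; weak lower semicontinuity delivers the matching lower bound.

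For the second assertion, strict monotonicity gives
\begin{equation*}
\eta\sum_{\ell=1}^{s}\int_0^T|U_\ell^k-u|_{V_\ell}^p\diff{t} \leq \sum_{\ell=1}^{s}\int_0^T\dualVell{A_\ell^k U_\ell^k - A_\ell^k u}{U_\ell^k - u}\diff{t},
\end{equation*}
and the right-hand side decomposes into three pairings. The diagonal $\sum_\ell\int_0^T\dualVell{A_\ell^k U_\ell^k}{U_\ell^k}\to\int_0^T\dualV{Au}{u}\diff{t}$ is the full-interval version of the convergence used above (the limsup from the energy inequality paired with the Minty liminf); the mixed pairing $\sum_\ell\int_0^T\dualVell{A_\ell^k U_\ell^k}{u}\to\int_0^T\dualV{Au}{u}\diff{t}$ follows from \eqref{4eq:convAu}; and $\sum_\ell\int_0^T\dualVell{A_\ell^k u}{U_\ell^k-u}\to 0$ follows from Lemma~\ref{lem:ConvergenceA} combined with $U_\ell^k\weak u$. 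Hence $|U_\ell^k-u|_{V_\ell}\to 0$ in $L^p(0,T)$. To pass from the seminorm to the full norm via Assumption~\ref{ass:spaces}, I would use the first assertion and dominated convergence against the uniform $L^\infty(0,T;H)$-bound to obtain $U^k\to u$ in $L^p(0,T;H)$, and then the elementary estimate $\|\U_\ell^n-\U^n\|_H^2\leq C\sum_j\|\U_j^n-\U^{n-1}\|_H^2$ together with \eqref{3eq:apriori} to obtain $\|U_\ell^k-U^k\|_{L^2(0,T;H)}^2\leq Ck$, which extends to $L^p(0,T;H)$-convergence by interpolation with the uniform $L^\infty$-bound.

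The main technical obstacle is reconciling the natural integration endpoint $t_{n(k)}$ of the discrete energy inequality with the target time $t$, since $A_\ell^k U_\ell^k$ only converges weakly in $L^q(0,T;V_\ell^*)$. This is resolved by transferring the indicator $\chi_{[0,t_{n(k)}]}$ onto the test element, so that one only needs the strong convergence $\chi_{[0,t_{n(k)}]}\phi\to\chi_{[0,t]}\phi$ in $L^p(0,T;V_\ell)$ for a fixed $\phi$, which follows from absolute continuity of the integral.
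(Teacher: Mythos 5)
Your proposal is correct, and it rests on the same three pillars as the paper's proof (the discrete energy inequality obtained by testing \eqref{2eq:semiDiscret} with $\U_\ell^i$, the Minty-type liminf bound from monotonicity, and the continuous energy identity for $u$), but it organizes them differently. The paper considers the single quantity
\begin{equation*}
\frac{1}{s}\sum_{\ell=1}^{s}\|u(t)-U_\ell^k(t)\|_H^2
+2\sum_{\ell=1}^{s}\int_0^t\dualVell{A_\ell^k(\tau)u(\tau)-A_\ell^k(\tau)U_\ell^k(\tau)}{u(\tau)-U_\ell^k(\tau)}\diff{\tau},
\end{equation*}
splits it into three groups of terms and shows its $\limsup$ is $\leq 0$; since both summands are nonnegative, this simultaneously yields the pointwise convergence of each $U_\ell^k(t)$ (hence of the average $U^k(t)$) in $H$ \emph{and} the seminorm convergence $|u-U_\ell^k|_{V_\ell}\to 0$ in $L^p(0,T)$ when $\eta>0$, so the passage to the full $V_\ell$-norm is immediate. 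You instead prove norm convergence $\|U^k(t)\|_H\to\|u(t)\|_H$ and invoke the Radon--Riesz property of $H$, and treat the monotonicity term separately over $[0,T]$; this is perfectly valid, and your handling of the endpoint mismatch $t_{n(k)}$ versus $t$ (applying monotonicity on $[0,t_{n(k)}]$ and moving the indicator onto the strongly convergent factors $u$ and $A_\ell^ku$) is sound and even avoids the coercivity bound the paper uses for the tail $\int_t^{t_n}$. The price you pay is that your first step only controls the average $U^k$, so for the second assertion you need the additional estimate $\|U_\ell^k-U^k\|_{L^2(0,T;H)}^2\leq Ck$ from \eqref{3eq:apriori} plus interpolation to recover $U_\ell^k\to u$ in $L^p(0,T;H)$ before applying the norm bound of Assumption~\ref{ass:spaces}; that estimate is correct and a nice use of the a priori bound, but it is an extra step the paper's bundled decomposition renders unnecessary.
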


\begin{proof}
	For the analysis we split up the terms as follows
	\begin{align*}
		&\frac{1}{s} \sum_{\ell = 1}^{s} \| u(t) - U_{\ell}^k(t) \|_H^2 \\
		&+ 2 \sum_{\ell = 1}^{s} \int_{0}^{t} \dualVell{A_{\ell}^k(\tau)u(\tau)
			- A_{\ell}^k(\tau) U_{\ell}^k(\tau) }{ u(\tau) - U_{\ell}^k(\tau)} 
			\diff{\tau}\\
		&= X_1^k(t) + X_2^k(t) + X_3^k(t)
	\end{align*}
	with
	\begin{align*}
		X_1^k(t)
		&= \| u(t) \|_H^2 + 2 \sum_{\ell = 1}^{s} \int_{0}^{t} 
		\dualVell{A_{\ell}^k(\tau)
			u(\tau)}{u(\tau)} \diff{\tau},\\
		X_2^k(t)
		&= - \frac{2}{s} \sum_{\ell = 1}^{s} \ska[H]{ u(t) }{ U_{\ell}^k(t) } -
		2\sum_{\ell = 1}^{s} \int_{0}^{t}
		\dualVell{A_{\ell}^k(\tau) u(\tau)}{U_{\ell}^k(\tau)} \diff{\tau}\\
		&\quad- 2\sum_{\ell = 1}^{s} \int_{0}^{t} \dualVell{A_{\ell}^k(\tau)
			U_{\ell}^k(\tau) }{ u(\tau)}
		\diff{\tau},\\
		X_3^k(t)
		&= \frac{1}{s} \sum_{\ell = 1}^{s} \| U_{\ell}^k(t) \|_H^2
		+ 2 \sum_{\ell = 1}^{s} \int_{0}^{t} \dualVell{A_{\ell}^k(\tau)
			U_{\ell}^k(\tau) }{U_{\ell}^k(\tau)} \diff{\tau}.
	\end{align*}
	We analyze $X_1^k$, $X_2^k$ and $X_3^k$ separately.
	For $X_1^k$ we apply Lemma~\ref{lem:ConvergenceA} and obtain
	\begin{align*}
		\lim_{k \to 0} X_1^k(t)
		= \| u(t) \|_H^2 + 2 \int_{0}^{t} \dualV{A(\tau) u(\tau)}{u(\tau)}
		\diff{\tau}.
	\end{align*}
	We use Lemma~\ref{lem:ConvergenceA}, \eqref{4eq:convLp}, 
	\eqref{4eq:convH} and \eqref{4eq:convAu} as well as the definition of 
	$U^k$ from \eqref{4eq:defConstInt} to see
	\begin{align*}
		\lim_{k \to 0} X_2^k(t)
		&= \lim_{k \to 0} \Big(- 2\ska[H]{ u(t) }{ U^k(t) } -
		2\sum_{\ell = 1}^{s} \int_{0}^{t}
		\dualVell{A_{\ell}^k(\tau) u(\tau)}{U_{\ell}^k(\tau)} \diff{\tau}\\
		&\quad- 2\sum_{\ell = 1}^{s} \int_{0}^{t} \dualVell{A_{\ell}^k(\tau)
			U_{\ell}^k(\tau) }{ u(\tau)}
		\diff{\tau}\Big)\\
		&= - 2\| u(t) \|_H^2 - 4 \int_{0}^{t} \dualV{A(\tau) u(\tau)}{u(\tau)}
		\diff{\tau}.
	\end{align*}
	The convergence of $(X_3^k(t))_{k >0}$ needs somewhat more
	attention.
	Here, we assume that $t \in (t_{n-1},t_n]$, $n \in \{1,\dots,N\}$, and obtain
	\begin{align*}
		X_3^k(t)
		&= \frac{1}{s} \sum_{\ell =1 }^{s} \| U_{\ell}^k(t) \|_H^2
		+ 2 \sum_{\ell =1 }^{s} \int_{0}^{t} \dualVell{A_{\ell}^k(\tau)
			U_{\ell}^k(\tau) }{U_{\ell}^k(\tau)} \diff{\tau}\\
		&\leq \frac{1}{s} \sum_{\ell =1 }^{s} \| \U_{\ell}^n \|_H^2
		+ 2k \sum_{i =1 }^{n} \sum_{\ell =1 }^{s}
		\dualVell{\A_{\ell}^i \U_{\ell}^i - \F_{\ell}^i}{\U_{\ell}^i}\\
		&\quad + 2\sum_{\ell =1 }^{s} \int_{0}^{t_n}
		\dualVell{f_{\ell}^k(\tau)}{U_{\ell}^k(\tau)} \diff{\tau}\\
		&\quad - 2 \sum_{\ell =1 }^{s} \int_{t}^{t_n} \dualVell{A_{\ell}^k(\tau)
			U_{\ell}^k(\tau) }{U_{\ell}^k(\tau)} \diff{\tau}.
	\end{align*}
	Inserting \eqref{3eq:HoelderU} and the identity \eqref{3eq:discreteDeriv} 
	as well as applying a telescopic sum argument, it follows that
	\begin{align*}
		&2k \sum_{i=1}^{n}\sum_{\ell =1 }^{s} \dualVell{\A_{\ell}^i \U_{\ell}^i -
			\F_{\ell}^i}{\U_{\ell}^i}\\
		&= - \frac{2}{s} \sum_{i=1}^{n} \sum_{\ell =1 }^{s} \ska[H]{\U_{\ell}^i -
			\U^{i-1}}{\U_{\ell}^i}
		\leq- \frac{1}{s} \sum_{\ell =1 }^{s} \|\U_{\ell}^n\|_H^2 + \|u_0\|_H^2.
	\end{align*}
	Assumption~\ref{ass:A}~(5) yields the bound
	\begin{align*}
		- 2 \sum_{\ell =1 }^{s} \int_{t}^{t_n} \dualVell{A_{\ell}^k(\tau)
			U_{\ell}^k(\tau) }{U_{\ell}^k(\tau)} \diff{\tau}
		\leq  2k s \lambda.
	\end{align*}
	Therefore, we obtain that
	\begin{align*}
		X_3^k(t)
		&\leq \| u_0 \|_H^2
		+ 2 \sum_{\ell =1 }^{s} \int_{0}^{t_n}
		\dualVell{f_{\ell}^k(\tau) } {U_{\ell}^k(\tau)} \diff{\tau} + 2k s \lambda.
	\end{align*}
	Due to Lemma~\ref{lem:ConvergenceF} and \eqref{4eq:convLp}, it follows that
	\begin{align*}
		\limsup_{k \to 0} X_3^k(t)
		\leq \| u_0 \|_H^2
		+ 2 \int_{0}^{t} \dualV{f(\tau) } {u(\tau)} \diff{\tau}.
	\end{align*}
	Thus, we have proved that
	\begin{align*}
		&\limsup_{k \to 0} \Big( \frac{1}{s} \sum_{\ell = 1}^{s} \| u(t) - U_{\ell}^k(t) 		
		\|_H^2 \\
    &\quad + 2\sum_{\ell = 1}^{s} \int_{0}^{t} \dualVell{A_{\ell}^k(\tau)u(\tau) -
			A_{\ell}^k(\tau) U_{\ell}^k(\tau) }{ u(\tau) - U_{\ell}^k(\tau)} \diff{\tau} \Big)\\
		&\leq  - \| u(t) \|_H^2 + \| u_0 \|_H^2
		+ 2 \int_{0}^{t} \dualV{f(\tau) - A(\tau) u(\tau)} {u(\tau)} \diff{\tau}\\
		&=  - \| u(t) \|_H^2 + \| u_0 \|_H^2
		+ 2 \int_{0}^{t} \dualV{u'(\tau)} {u(\tau)} \diff{\tau}\\
		&=  - \| u(t) \|_H^2 + \| u_0 \|_H^2
		+  \int_{0}^{t} \frac{\diff{}}{\diff{t}} \| u(\tau) \|_H^2 \diff{\tau}
		= 0.
	\end{align*}
	The monotonicity condition from Assumption~\ref{ass:A}~(3) and 
	\eqref{3eq:HoelderU} then imply that
	\begin{align*}
		&\| u(t) - U^k(t) \|_H^2\\
		&\leq \frac{1}{s} \sum_{\ell = 1}^{s} \| u(t) - U_{\ell}^k(t) \|_H^2
		+ 2\eta  \sum_{\ell = 1}^{s} \int_{0}^{t} |u(\tau) - 
		U_{\ell}^k(\tau)|_{V_{\ell}}^p \diff{\tau}\\
    &\leq \Big( \frac{1}{s} \sum_{\ell = 1}^{s} \| u(t) - U_{\ell}^k(t) \|_H^2 \\
    &\quad + 2\sum_{\ell = 1}^{s} \int_{0}^{t} \dualVell{A_{\ell}^k(\tau)u(\tau) -
      A_{\ell}^k(\tau) U_{\ell}^k(\tau) }{ u(\tau) - U_{\ell}^k(\tau)} \diff{\tau} \Big)
		\to 0
	\end{align*}
	as $k \to 0$ for every $t \in [0,T]$. This proves that $U^k(t) \to u(t)$ in 
	$H$ as $k \to0$ for every $t \in [0,T]$.
	Assuming that $\eta$ in Assumption~\ref{ass:A}~(3) is strictly positive and 
	applying the norm bound from Assumption~\ref{ass:spaces},  
	it follows that
	\begin{align*}
		\| u - U_{\ell}^k \|_{L^p(0,T;V_{\ell})}
		&\leq c_{V_{\ell}} \Big(\int_{0}^{T} \big(\|u(t) - U_{\ell}^k(t)\|_{H}
		+ |u(t) - U_{\ell}^k(t)|_{V_{\ell}}\big)^p \diff{t}\Big)^{\frac{1}{p}}
		\to 0
	\end{align*}
	as $k \to 0$ for every $\ell \in \{1,\dots,s\}$.
\end{proof}

\begin{remark}\label{remark:etaNonZero}
	If $\eta$ from Assumption~\ref{ass:A}~(3) is only strictly positive for 
	some of the operator families $\{A_{\ell}(t)\}_{t\in[0,T]}$, $\ell \in \{1,\dots,s\}$, 
	then one can see that in these particular spaces we have $U_{\ell}^k \to u$ in 
	$L^p(0,T;V_{\ell})$.
\end{remark}

\end{document}